\definecolor{lightgray}{rgb}{0.8, 0.8, 0.8}
\definecolor{darkgray}{rgb}{0.5, 0.5, 0.5}
\newtheorem{theorem}{Theorem}[section]
\newtheorem{proposition}[theorem]{Proposition}
\newtheorem{lemma}[theorem]{Lemma}
\newcommand{\newclaimproof}[2]{
   \newenvironment{#1}[1][]%
   {%
      \begin{trivlist}%
         \item[\hspace{\labelsep}\textnormal{\textit{#2.%
            \def\op@@@arg{##1}%
            \ifx\op@@@arg\empty
            \fi
         }}]%
   }%
   {%
     \hfill $\diamond$
      \end{trivlist}%
   }%
}
\newcounter{todocounter}
\newcommand{\Av}{\operatorname{Av}}
\newcommand{\C}{\mathcal{C}}
\newcommand{\D}{\mathcal{D}}
\newcommand{\Grid}{\operatorname{Grid}}
\newcommand{\Geom}{\operatorname{Geom}}
\newcommand{\st}{\::\:}
\newcommand{\bij}{\varphi}
\newcommand{\zpm}{0/\mathord{\pm} 1}
\newcommand{\ga}{\textsf{a}}
\newcommand{\gb}{\textsf{b}}
\newcommand{\gc}{\textsf{c}}
\newcommand{\gd}{\textsf{d}}
\newcommand{\xa}{x_\ga}
\newcommand{\xb}{x_\gb}
\newcommand{\xc}{x_\gc}
\newcommand{\xd}{x_\gd}
\newcommand{\OEISlink}[1]{#1}
\newcommand{\OEISref}{OEIS~\cite{sloane:the-on-line-enc:}}
\newcommand{\fnmatrix}[2]{\mbox{\begin{footnotesize}$\left(\begin{array}{#1}#2\end{array}\right)$\end{footnotesize}}}
\newfont{\footsc}{cmcsc10 at 8truept}
\newfont{\footbf}{cmbx10 at 8truept}
\newfont{\footrm}{cmr10 at 10truept}
\renewenvironment{abstract}%
                {
                  \begin{list}{}%
                     {\setlength{\rightmargin}{1in}%
                      \setlength{\leftmargin}{1in}}%
                   \item[]\ignorespaces\begin{small}}%
                 {\end{small}\unskip\end{list}}
\title{\sc Inflations of Geometric Grid Classes: Three Case Studies}
\author{%
Michael H. Albert\\[-0.25ex]
\small Department of Computer Science\\[-0.5ex]
\small University of Otago\\[-0.5ex]
\small Dunedin, New Zealand\\[1.5ex]
M. D. Atkinson\\[-0.25ex]
\small Department of Computer Science\\[-0.5ex]
\small University of Otago\\[-0.5ex]
\small Dunedin, New Zealand\\[1.5ex]
Vincent Vatter\footnote{Vatter's research was sponsored by the National Security Agency under Grant Number H98230-12-1-0207.  The United States Government is authorized to reproduce and distribute reprints not-withstanding any copyright notation herein.}\\[-0.25ex]
\small Department of Mathematics\\[-0.5ex]
\small University of Florida\\[-0.5ex]
\small Gainesville, Florida USA\\[-1.5ex]
}
\date{}
\begin{document}
\maketitle

\pagestyle{main}

\begin{abstract}
We enumerate three specific permutation classes defined by two forbidden patterns of length four.   The techniques involve inflations of geometric grid classes.
\end{abstract}

\section{Introduction}\label{infinite-simples-intro}

\emph{Classes} of permutations are sets that are closed downwards under taking subpermutations.  They are often presented as sets $\C$ that avoid a given set $B$ of permutations (i.e. the members of $\C$ have no subpermutation in the set $B$).  We express this by the notation $\C=\Av(B)$.  We may take $B$ to be an antichain (a set of pairwise incomparable permutations), in which case we say that $B$ is the \emph{basis} of $\C$.

Much of the inspiration for the early work on permutation classes was driven by the enumeration problem: given $\C=\Av(B)$, how many permutations of each length does $\C$ contain?  The answer to such a question could be a formula giving this number $|\C_n|$ in terms of the length, $n$,  a generating function $\sum |\C_n|x^n$ or simply an asymptotic result about the behaviour of $|\C_n|$ as $n\rightarrow\infty$.

Recently, Albert, Atkinson, Bouvel, Ru\v{s}kuc, and Vatter~\cite{albert:geometric-grid-:} have developed the theory of geometric grid classes, and Albert, Ru\v{s}kuc, and Vatter~\cite{albert:inflations-of-g:} have continued this exploration by investigating the theory of inflations of such classes.  Our aim in this paper is to:
\begin{itemize}
\item demonstrate the effectiveness of this approach, and
\item illustrate how one might implement these techniques in a ``real world'' setting, bypassing what would otherwise be thorny theoretical issues.
\end{itemize}
It should be noted that this presentation is historically backward; the results of this paper preceded and inspired the two more theoretical papers cited above.

In this work, our examples are exclusively classes with two basis elements of length four, which we call \emph{2 $\times$ 4 classes}.  It must be admitted that the attention paid to 2 $\times$ 4 classes is not entirely in proportion to their intrinsic importance.  Nevertheless, these classes represent a significant dataset which seems to contain some difficult enumerative problems.  Thus they pose a good challenge for new approaches to the enumeration of restricted permutations.

There are $56$ essentially different (i.e. inequivalent under symmetries) 2 $\times$ 4 classes.  Some of these classes nevertheless share the same enumeration (a phenomenon called \emph{Wilf-equivalence}), so the 2 $\times$ 4 classes have only $38$ different enumerations \cite{bona:the-permutation:,kremer:permutations-wi:, kremer:postscript:-per:, kremer:finite-transiti:, le:wilf-classes-of:}.  This paper brings the number of $2\times 4$ Wilf classes which have been enumerated to $24$ (see Wikipedia~\cite{wikipedia:enumerations-of:}, which contains a list of such enumerations).

A central part of our approach depends on analysing the simple permutations in a class.  An \emph{interval} in the permutation $\pi$ is a set of contiguous indices $I=\{a,a+1,\dots,b\}$ such that the set $\{\pi(i)\st i\in I\}$ is also contiguous.  Every permutation $\pi$ of length $n$ has \emph{trivial intervals} of lengths $0$, $1$, and $n$, and other intervals are called \emph{proper}.  A permutation with no proper intervals is called \emph{simple}.  Another way to think about simple permutations arises repeatedly throughout our arguments.  Any subset  $p_1,\dots$ of entries of the permutation $\pi$ defines a minimal axes-parallel rectangle (or simply, \emph{box}), whose left edge slices through the leftmost of these entries, top edge slices through the greatest of these entries, and so on.  A simple permutation is one in which the box defined by any proper subset of two or more of its entries is \emph{separated} by an entry outside the box, by which we mean that this entry lies either
\begin{itemize}
\item vertically amongst these entries but to the left (or right) of all of them (\emph{vertical separation}), or
\item horizontally amongst these entries but above (or below) all of them (\emph{horizontal separation}).
\end{itemize}

Simple permutations are precisely those that do not arise from a non-trivial inflation, in the following sense.  Given a permutation $\sigma$ of length $m$ and nonempty permutations $\alpha_1,\dots,\alpha_m$, the \emph{inflation} of $\sigma$ by $\alpha_1,\dots,\alpha_m$,  denoted $\sigma[\alpha_1,\dots,\alpha_m]$, is the permutation of length $|\alpha_1|+\cdots+|\alpha_m|$ obtained by replacing each entry $\sigma(i)$ by an interval that is order isomorphic to $\alpha_i$ in such a way that the intervals are order isomorphic to $\sigma$.  For example,
\[
2413[1,132,321,12]=4\ 798\ 321\ 56. 
\]
We give two particular types of inflations special terminology and notation.  The inflation $12[\alpha_1,\alpha_2]$ is called a \emph{(direct) sum} and denoted by $\alpha_1\oplus\alpha_2$.  A permutation is \emph{sum decomposable} if it can be expressed as a nontrivial sum, and \emph{sum indecomposable} otherwise.  The inflation $21[\alpha_1,\alpha_2]$ is called a \emph{skew sum}, similarly denoted $\alpha_1\ominus\alpha_2$, and accompanied by analogous terms \emph{skew decomposable} and \emph{skew indecomposable}.  We extend the notion of direct and skew sum to classes, defining
\[
\C\oplus\D=\{\pi\oplus\sigma\st \pi\in\C\mbox{ and }\sigma\in\D\},
\]
with an analogous definition for $\C\ominus\D$.

The precise connection between simple permutations and inflations is furnished by the following result.

\begin{lemma}[Albert and Atkinson~\cite{albert:simple-permutat:}]
For every permutation $\pi$ there is a unique simple permutation $\sigma$ such that $\pi=\sigma[\alpha_1,\alpha_2,\ldots,\alpha_m]$.  Furthermore, except when $\sigma=12$ or $\sigma=21$, the intervals of $\pi$ that correspond to $\alpha_1,\alpha_2,\ldots,\alpha_m$ are uniquely determined.  In the case that $\sigma=12$ (respectively $\sigma=21$), the intervals are unique so long as we require the first of the two intervals to be sum (respectively skew) indecomposable.
\end{lemma}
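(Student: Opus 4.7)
The plan is to extract the simple quotient $\sigma$ from the structure of the proper intervals of $\pi$, arguing by induction on $|\pi|$; the base case where $\pi$ is already simple is trivial, taking $\sigma = \pi$ and each $\alpha_i$ a singleton. The key technical ingredient is an \emph{overlap lemma}: if two intervals $I, J$ of $\pi$ overlap (intersect without nesting), then each of $I \cap J$, $I \cup J$, $I \setminus J$, $J \setminus I$ is again an interval of $\pi$, and the pattern induced on the three-block partition $\{I \setminus J,\ I \cap J,\ J \setminus I\}$ is monotone (of type $123$ or $321$). This is essentially immediate from the definition of an interval as a set of positions whose index-set and value-set are both contiguous.

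Let $I_1,\ldots,I_k$ be the maximal proper intervals of $\pi$. If two of them overlap, their union is an interval by the overlap lemma and must equal $[n]$ by maximality; the monotone three-block structure then exhibits $\pi$ as a direct or skew sum, so $\sigma = 12$ or $21$. Otherwise the $I_j$ are pairwise disjoint, and augmenting them with the entries that lie in no $I_j$ (each taken as a singleton) yields a partition of $[n]$ into $m$ consecutive parts, each an interval of $\pi$. Define $\sigma$ as the pattern induced on a transversal of these parts and $\alpha_i$ as $\pi$ restricted to the $i$-th part. Then $\sigma$ must be simple: a proper interval of $\sigma$ would correspond to a union of consecutive parts forming a proper interval of $\pi$, and such a union cannot strictly contain any $I_j$ (by maximality) nor consist only of singletons (since each such singleton, being in no maximal proper interval, lies in no proper interval at all), so it reduces to a single part.

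For uniqueness when $\sigma$ is simple of length at least $4$: any alternative decomposition $\pi = \tau[\beta_1,\ldots,\beta_{m'}]$ with $\tau$ simple produces natural intervals $J_j$, and since $\tau$ has no proper intervals, no nontrivial union of $J_j$'s is an interval of $\pi$. The $J_j$'s must therefore coincide with the maximal proper intervals of $\pi$ augmented by singletons, forcing $\tau = \sigma$ and $\beta_j = \alpha_j$. In the exceptional cases $\sigma \in \{12, 21\}$, associativity of $\oplus$ and $\ominus$ makes the grouping genuinely ambiguous, but requiring $\alpha_1$ to be sum (respectively skew) indecomposable pins down the leading summand uniquely, after which $\alpha_2$ is determined as $\pi$ restricted to the complementary interval and induction on the remaining length completes the argument.

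\textbf{Main obstacle.} I expect the main difficulty to lie in the overlapping case of the existence argument: proving the overlap lemma cleanly and showing that overlapping maximal proper intervals are forced to cover all of $[n]$ with a monotone three-block structure, including handling the combinatorics when more than two maximal proper intervals interact. Once this classification of the top-level quotient is in place, the non-overlapping case and the uniqueness statements reduce to largely formal verifications.
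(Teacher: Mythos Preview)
The paper does not prove this lemma at all: it is stated with attribution to Albert and Atkinson~\cite{albert:simple-permutat:} and used as background, with no proof given in the present paper. So there is no ``paper's own proof'' to compare against.

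That said, your sketch follows the standard line of argument for this result and is essentially correct. The overlap lemma and the dichotomy between the overlapping case (forcing $\sigma\in\{12,21\}$) and the disjoint case (yielding a simple quotient via the partition into maximal proper intervals plus singletons) is exactly how the original proof in~\cite{albert:simple-permutat:} proceeds. One point to tighten in the uniqueness paragraph: you assert that the blocks $J_j$ of an alternative decomposition ``must coincide with the maximal proper intervals of $\pi$ augmented by singletons,'' but the justification you give (that no nontrivial union of $J_j$'s is an interval) only shows the $J_j$'s form a coarsest such partition, not immediately that each nontrivial $J_j$ is itself a \emph{maximal} proper interval. The missing step is to check that any proper interval $K$ strictly containing some $J_j$ must, via the overlap lemma applied to $K$ and the neighbouring $J$'s, be a union of consecutive $J$'s, contradicting simplicity of $\tau$. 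This is routine but worth making explicit.
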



One of the first general enumeration results is the following from \cite{albert:simple-permutat:}:

\begin{theorem}
\label{thm-fin-simples-alg}
If the class $\C$ contains only finitely many simple permutations, then $\C$ has an algebraic generating function.
\end{theorem}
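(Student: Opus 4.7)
The plan is to use the unique substitution decomposition from the preceding lemma, together with the finiteness hypothesis, to derive a finite system of polynomial equations whose minimal positive solution has $f_\C(x)=\sum |\C_n|x^n$ as a component; algebraicity then follows from the standard theorem that each component of such a solution is an algebraic power series.

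First I would partition the nonempty permutations of $\C$ according to their unique simple root: the single-point permutation $1$, the sum-decomposable class $\C^\oplus$ (root $12$), the skew-decomposable class $\C^\ominus$ (root $21$), and, for each simple $\sigma \in \C$ with $|\sigma| \geq 4$ (of which there are only finitely many by hypothesis), the class $\C^\sigma$ of permutations with root $\sigma$. This gives
\[
f_\C(x) = x + f_{\C^\oplus}(x) + f_{\C^\ominus}(x) + \sum_{\sigma} f_{\C^\sigma}(x).
\]
For the sum-decomposable piece, by the lemma every $\pi \in \C^\oplus$ factors uniquely as $\alpha_1 \oplus \alpha_2$ with $\alpha_1$ sum-indecomposable. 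The subtlety is that $\alpha_1 \oplus \alpha_2 \in \C$ does not follow from $\alpha_1, \alpha_2 \in \C$: a sum-decomposable basis element $\beta = \beta' \oplus \beta''$ of $\C$ may be split between the two pieces. I would encode the extra constraints as a finite conjunction of disjunctive avoidance conditions, one for each sum-decomposition of each sum-decomposable basis element, and then use inclusion--exclusion to express $f_{\C^\oplus}$ as a polynomial in generating functions of classes of the form $\C \cap \Av(T)$ for $T$ a finite set of sub-patterns of basis elements. The skew case is symmetric.

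Each simple root $\sigma$ with $|\sigma| \geq 4$ is handled analogously: a basis element $\beta$ of $\C$ appears in $\sigma[\alpha_1, \ldots, \alpha_m]$ if and only if some finite ``recipe'' determined by $\sigma$ and $\beta$ (an assignment of the entries of $\beta$ to blocks of $\sigma$ compatible with $\sigma$) is realized by choosing the prescribed sub-pattern of $\beta$ inside each $\alpha_i$ that receives entries. Negating this finite disjunction across all recipes and all basis elements, and again applying inclusion--exclusion, expresses $f_{\C^\sigma}$ as a polynomial in generating functions $f_{\C \cap \Av(T)}$.

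The main obstacle is to show the recursion closes on itself. Each class $\C \cap \Av(T)$ that arises is a subclass of $\C$, hence has only finitely many simple permutations, and can be expanded by the same procedure. Because every $T$ that ever appears consists of sub-patterns of basis elements of $\C$, and the basis is finite, only finitely many distinct $T$ can occur throughout the recursion. This yields a finite polynomial system in the finitely many generating functions $\{f_{\C \cap \Av(T)}\}$, and the proof is completed by invoking the algebraicity theorem for such systems. The delicate part is the bookkeeping to verify that the recursion is genuinely polynomial rather than merely implicit, and that no infinite family of auxiliary subclasses can sneak in.
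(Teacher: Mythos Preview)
The paper does not prove this theorem; it is quoted as a known result of Albert and Atkinson~\cite{albert:simple-permutat:}, so there is no in-paper proof to compare against.  Your outline is essentially the original Albert--Atkinson argument: decompose by simple root, handle the correlated avoidance constraints on the inflating intervals by inclusion--exclusion, and show the resulting polynomial system involves only finitely many auxiliary subclasses.

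There is one genuine gap.  Your closure step relies on the claim that ``the basis is finite'', but finite basis is not a hypothesis of the theorem---it is a separate theorem (also due to Albert and Atkinson in the same paper) that must be established first.  Without it, your argument that only finitely many sets $T$ of sub-patterns can arise breaks down, since an infinite basis could have infinitely many sub-patterns.  You should either cite that result explicitly or sketch its proof (which also proceeds via the substitution decomposition: one shows that any basis element of $\C$ is either a one-point extension of a simple permutation in $\C$, or has bounded length determined by the finitely many simples).  Once finite basis is in hand, the rest of your plan goes through as in the original.
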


This theorem has since been generalised in two different directions.  Brignall, Huczynska, and Vatter~\cite{brignall:simple-permutat:} introduced the notion of ``query-complete sets of properties'' to show that if a class satisfies the hypotheses of Theorem~\ref{thm-fin-simples-alg}, then such subsets as the even permutations or the involutions in $\C$ have algebraic generating functions.  More relevant to our investigation, 
\cite{albert:inflations-of-g:} significantly weakened the hypotheses of Theorem~\ref{thm-fin-simples-alg}, showing that its conclusion holds even when $\C$ contains infinitely many simple permutations, so long as these simple permutations lie in a geometric grid class, a notion introduced in Section~\ref{sec-geom-grid-review}.  Before this, we consider an example which gives the flavour of our approach without requiring much additional machinery.

\section{Example \#1: Avoiding 4213 and 3142}\label{sec-4213-3142}

Before describing our first example we need to introduce a family of simple permutations and quote a result.  A {\it parallel alternation\/} is a permutation whose plot can be divided into two parts, by a single horizontal or vertical line, so that the points on either side of this line are both either increasing or decreasing and for every pair of points from the same part there is a point from the other part which {\it separates\/} them, i.e., there is a point from the other part which lies either horizontally or vertically between them.  It is easy to see that a parallel alternation of length at least four is simple if and only if its length is even, it does not begin with its smallest entry, and it does not end with its greatest entry.  Thus there are precisely four simple parallel alternations of each even length at least six, shown in Figure~\ref{fig-par-alts}, and no simple parallel alternations of odd length.

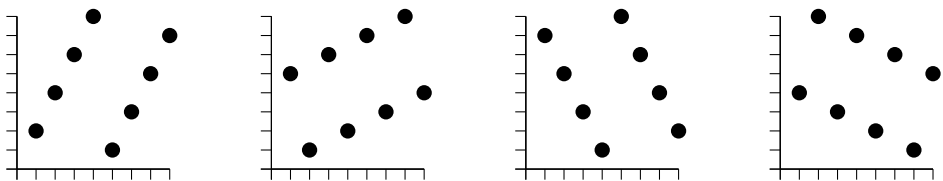
\begin{figure}
\begin{center}
\begin{tabular}{ccccccc}

\psset{xunit=0.01in, yunit=0.01in}
\psset{linewidth=0.005in}
\begin{pspicture}(0,0)(80,80)
\psaxes[dy=10,Dy=1,dx=10,Dx=1,tickstyle=bottom,showorigin=false,labels=none](0,0)(80,80)
\pscircle*(10,20){0.04in}
\pscircle*(20,40){0.04in}
\pscircle*(30,60){0.04in}
\pscircle*(40,80){0.04in}
\pscircle*(50,10){0.04in}
\pscircle*(60,30){0.04in}
\pscircle*(70,50){0.04in}
\pscircle*(80,70){0.04in}
\end{pspicture}

&\rule{0.2in}{0pt}&

\psset{xunit=0.01in, yunit=0.01in}
\psset{linewidth=0.005in}
\begin{pspicture}(0,0)(80,80)
\psaxes[dy=10,Dy=1,dx=10,Dx=1,tickstyle=bottom,showorigin=false,labels=none](0,0)(80,80)
\pscircle*(10,50){0.04in}
\pscircle*(20,10){0.04in}
\pscircle*(30,60){0.04in}
\pscircle*(40,20){0.04in}
\pscircle*(50,70){0.04in}
\pscircle*(60,30){0.04in}
\pscircle*(70,80){0.04in}
\pscircle*(80,40){0.04in}
\end{pspicture}

&\rule{0.2in}{0pt}&

\psset{xunit=0.01in, yunit=0.01in}
\psset{linewidth=0.005in}
\begin{pspicture}(0,0)(80,80)
\psaxes[dy=10,Dy=1,dx=10,Dx=1,tickstyle=bottom,showorigin=false,labels=none](0,0)(80,80)
\pscircle*(10,70){0.04in}
\pscircle*(20,50){0.04in}
\pscircle*(30,30){0.04in}
\pscircle*(40,10){0.04in}
\pscircle*(50,80){0.04in}
\pscircle*(60,60){0.04in}
\pscircle*(70,40){0.04in}
\pscircle*(80,20){0.04in}
\end{pspicture}

&\rule{0.2in}{0pt}&

\psset{xunit=0.01in, yunit=0.01in}
\psset{linewidth=0.005in}
\begin{pspicture}(0,0)(80,80)
\psaxes[dy=10,Dy=1,dx=10,Dx=1,tickstyle=bottom,showorigin=false,labels=none](0,0)(80,80)
\pscircle*(10,40){0.04in}
\pscircle*(20,80){0.04in}
\pscircle*(30,30){0.04in}
\pscircle*(40,70){0.04in}
\pscircle*(50,20){0.04in}
\pscircle*(60,60){0.04in}
\pscircle*(70,10){0.04in}
\pscircle*(80,50){0.04in}
\end{pspicture}

\end{tabular}
\end{center}
\caption{The four orientations of parallel alternations.}
\label{fig-par-alts}
\end{figure}

Schmerl and Trotter~\cite[Corollary 5.10]{schmerl:critically-inde:} proved a result (in the more general context of irreflexive binary relational structures) which in our context states that every simple permutation of length $n$ which is not a parallel alternation contains simple subpermutations of every length $5\le m\le n$.  Therefore, in order to establish that the permutation class $\C$ contains \emph{only} parallel alternations, we just need to check that it does not contain any simple permutation of length $5$, i.e., that
\[
\C\subseteq\Av(24153, 25314, 31524, 35142, 41352, 42513).
\]
Clearly this holds for the class $\Av(4213,3142)$, because $4213$ is contained in $25314$ and $42513$ while $3142$ is contained in $24153$, $31524$, $35142$, and $41352$.  Moreover, it is easily seen (because of the basis element $3142$) that $\Av(4213,3142)$ can contain only parallel alternations oriented as on the left of Figure~\ref{fig-par-alts}, i.e., those of the form
\[
246\cdots(2m)135\cdots(2m-1).
\]

With the simple permutations characterised, we now describe the allowed inflations.  It is easy to see that $\pi\oplus\sigma\in\Av(4213,3142)$ for all $\pi,\sigma\in\Av(4213,3142)$, or in other words, that the class is {\it sum closed\/}.  Thus, letting $f$ denote the generating function for nonempty permutations in $\Av(4213,3142)$ and $f_\oplus$ denote the generating function for sum decomposable permutations, we see that $f_\oplus=\left(f-f_\oplus\right)f$, from which it follows that
\[
f_\oplus=\frac{f^2}{1+f}.
\]
For skew sums, we have that $\pi\ominus\sigma\in\Av(4213,3142)$ if and only if $\pi\in\Av(4213,3142)$ and $\sigma\in\Av(213)$.  Letting
\[
c=\frac{1-2x-\sqrt{1-4x}}{2x}
\]
denote the generating function for the Catalan numbers (with constant term zero), which is well-known as the generating function of nonempty permutations in $\Av(213)$, we have $f_\ominus=\left(f-f_\ominus\right)c$, so
\[
f_\ominus=\frac{cf}{1+c}.
\]

Now we must count the inflations of the parallel alternations $246\cdots (2m)135\cdots (2m-1)$ for $m\ge 2$.  This is relatively straightforward:
\begin{itemize}
\item the interval inflating $2m-1$ must avoid $213$,
\item all other intervals inflating odd entries must be increasing, and
\item even entries may be inflated by any element of $\Av(4213,3142)$.
\end{itemize}
This shows that the contribution of inflations of $246\cdots (2m)135\cdots (2m-1)$, for each $m\ge 2$, is
\[
f^m\left(\frac{x}{1-x}\right)^{m-1}c,
\]
showing that
\[
f
=
x+f_\oplus+f_\ominus+\sum_{m=2}^\infty f^m\left(\frac{x}{1-x}\right)^{m-1}c
=
x+\frac{f^2}{1+f}+\frac{cf}{1+c}+\frac{xcf^2}{1-x-xf}.
\]
From this we obtain:

\begin{theorem}\label{thm-3142-4213}
The generating function $f$ for $\Av(4213,3142)$ satisfies
\[
\begin{array}{rclcc}
x^3f^6
&+&
(7x^3-7x^2+2x)f^5&&\\
&+&
(x^4+14x^3-21x^2+10x-1)f^4&&\\
&+&
(4x^4+8x^3-19x^2+11x-2)f^3&&\\
&+&
(6x^4-5x^3-2x^2+2x)f^2&&\\
&+&
(4x^4-7x^3+4x^2-x)f&&\\
&+&
x^4-2x^3+x^2
&=&0.
\end{array}
\]
\end{theorem}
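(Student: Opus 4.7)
The derivation preceding the statement already yields the system
\[
f = x + \frac{f^2}{1+f} + \frac{cf}{1+c} + \frac{xcf^2}{1-x-xf},
\]
coupled with the algebraic equation $xc^2 + (2x-1)c + x = 0$ satisfied by $c$, the generating function for nonempty members of $\Av(213)$ (as is readily checked from $c=(1-2x-\sqrt{1-4x})/(2x)$). The theorem amounts to eliminating $c$ from this pair and packaging the result as a polynomial in $x$ and $f$, so that is what remains to verify.

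My plan is to multiply the functional equation through by $(1+f)(1+c)(1-x-xf)$, producing a polynomial identity $P(x,f,c) = 0$ that is quadratic in $c$. Using $xc^2 = (1-2x)c - x$ I would reduce $P$ to a polynomial linear in $c$, say $A(x,f)\,c + B(x,f) = 0$, whence $c = -B/A$. Substituting this into $xc^2+(2x-1)c+x=0$ and clearing $A^2$ then gives
\[
xB^2 - (2x-1)AB + xA^2 = 0,
\]
a polynomial identity in $x$ and $f$ alone. After factoring out any spurious polynomial factors introduced during the elimination, I would check that the remaining irreducible factor coincides with the displayed sextic.

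The main obstacle is not mathematical but computational: the algebra is bulky and would naturally be delegated to a computer algebra system. A secondary concern is that the resultant step can introduce extraneous factors of $x$, $1+f$, or $1-x-xf$, but these can be identified and discarded by comparing the initial Taylor coefficients of $f$ computed directly from the enumeration of short permutations in $\Av(4213,3142)$ against the solutions of the candidate sextic. Since the combinatorial content is already captured in the sum decomposition, the skew decomposition, and the characterisation of the parallel-alternation inflations established above, this mechanical elimination completes the proof.
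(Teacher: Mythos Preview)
Your proposal is correct and follows exactly the paper's approach: the combinatorial decomposition leading to the displayed functional equation in $f$ and $c$ is the entire content, and the paper simply says ``From this we obtain'' before stating the sextic, leaving the elimination of $c$ (via the Catalan quadratic) implicit. You have in fact given more detail on the elimination step than the paper does.
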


The first several terms of this sequence are
\[
1,2,6,22,89,379,1664,7460,33977,156727,730619,3436710,16291842,77758962,
\]
sequence \OEISlink{A165541} in the \OEISref. The discriminant of the polynomial defining the generating function has a smallest positive root $\rho \approx 0.1895$, which is therefore the radius of convergence of the generating function and as $\Av(4213,3142)$ is sum closed (and hence the sequence $f_n$ is supermultiplicative) we can conclude that $f_n^{1/n} \to 1/\rho \approx 5.2778$. More detailed on the asymptotic behaviour of $f_n$ could be determined by standard methods as found for instance in Flajolet and Sedgewick~\cite[Section VII.7]{flajolet:analytic-combin:}.

\begin{figure}
\begin{center}
\begin{tabular}{ccccccc}

\psset{xunit=0.01in, yunit=0.01in}
\psset{linewidth=0.005in}
\begin{pspicture}(0,0)(80,80)
\psaxes[dy=10,Dy=1,dx=10,Dx=1,tickstyle=bottom,showorigin=false,labels=none](0,0)(80,80)
\pscircle*(10,20){0.04in}
\pscircle*(20,40){0.04in}
\pscircle*(30,60){0.04in}
\pscircle*(40,80){0.04in}
\pscircle*(50,10){0.04in}
\pscircle*(60,70){0.04in}
\pscircle*(70,50){0.04in}
\pscircle*(80,30){0.04in}
\end{pspicture}

&\rule{0.2in}{0pt}&

\psset{xunit=0.01in, yunit=0.01in}
\psset{linewidth=0.005in}
\begin{pspicture}(0,0)(80,80)
\psaxes[dy=10,Dy=1,dx=10,Dx=1,tickstyle=bottom,showorigin=false,labels=none](0,0)(80,80)
\pscircle*(10,30){0.04in}
\pscircle*(20,50){0.04in}
\pscircle*(30,70){0.04in}
\pscircle*(40,10){0.04in}
\pscircle*(50,80){0.04in}
\pscircle*(60,60){0.04in}
\pscircle*(70,40){0.04in}
\pscircle*(80,20){0.04in}
\end{pspicture}

&\rule{0.2in}{0pt}&

\psset{xunit=0.01in, yunit=0.01in}
\psset{linewidth=0.005in}
\begin{pspicture}(0,0)(80,80)
\psaxes[dy=10,Dy=1,dx=10,Dx=1,tickstyle=bottom,showorigin=false,labels=none](0,0)(80,80)
\pscircle*(10,40){0.04in}
\pscircle*(20,80){0.04in}
\pscircle*(30,10){0.04in}
\pscircle*(40,70){0.04in}
\pscircle*(50,20){0.04in}
\pscircle*(60,60){0.04in}
\pscircle*(70,30){0.04in}
\pscircle*(80,50){0.04in}
\end{pspicture}

&\rule{0.2in}{0pt}&

\psset{xunit=0.01in, yunit=0.01in}
\psset{linewidth=0.005in}
\begin{pspicture}(0,0)(80,80)
\psaxes[dy=10,Dy=1,dx=10,Dx=1,tickstyle=bottom,showorigin=false,labels=none](0,0)(80,80)
\pscircle*(10,50){0.04in}
\pscircle*(20,10){0.04in}
\pscircle*(30,80){0.04in}
\pscircle*(40,20){0.04in}
\pscircle*(50,70){0.04in}
\pscircle*(60,30){0.04in}
\pscircle*(70,60){0.04in}
\pscircle*(80,40){0.04in}
\end{pspicture}

\end{tabular}
\end{center}
\caption{Examples of {\it wedge simple permutations\/}.}\label{fig-wedge-simples}
\end{figure}
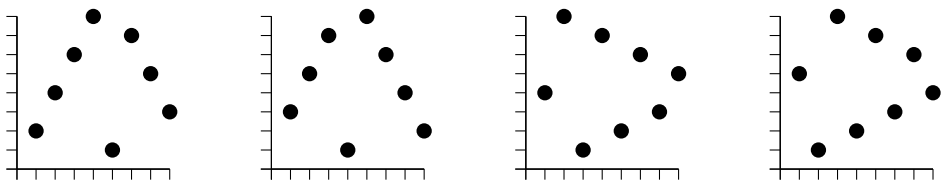

This is not the only 2 $\times$ 4 class to which such elementary techniques apply.  For example:
\begin{itemize}
\item $\Av(4213,1342)$ contains precisely two simple permutations of each length $n\ge 4$, both of which are wedge simple permutations oriented as the first two permutations shown in Figure~\ref{fig-wedge-simples}.  This family of simple permutations is well enough behaved that we could enumerate the class, but this has already been done by Kremer and Shiu~\cite{kremer:finite-transiti:} and can now be performed automatically using the Maple package {\sc FinLabel} described in Vatter~\cite{vatter:finitely-labele:}.
\item $\Av(4213,3124)$ contains precisely two simple permutations of each length $n\ge 4$, oriented as the rightmost two permutations shown in Figure~\ref{fig-wedge-simples}.  This class was enumerated by B\'ona~\cite{bona:the-permutation:}.
\end{itemize}

\section{Grid Classes and Regular Languages}\label{sec-geom-grid-review}

Given a permutation $\pi$ of length $n$ and sets $X,Y\subseteq[n]$, we write $\pi(X\times Y)$ for the permutation that is order isomorphic to the subsequence of $\pi$ with indices from $X$ and values in $Y$.  For example, $286435179([4,9]\times[5,9])$ consists of the subsequence of entries in indices $4$ through $9$ which have values between $5$ and $9$; in this case the subsequence is $579$, so $286435179([4,9]\times[5,9])=123$.

Suppose that $M$ is a $t\times u$ matrix%
\footnote{Note that in order for the cells of the matrix $M$ to be compatible with plots of permutations, we use Cartesian coordinates for our matrices, indexing them first by column, from left to right starting with $1$, and then by row, from bottom to top.}%
with entries from $\{0,\pm 1\}$.  A \emph{gridded permutation} is a permutation $\pi$ equipped with \emph{row} and \emph{column divisions} denoted respectively by $1=c_1\le\cdots\le c_{t+1}=n+1$ and $1=r_1\le\cdots\le r_{u+1}=n+1$ (where $n$ is the length of $\pi$).  This gridded permutation (or simply, gridding of $\pi$) is \emph{compatible} with the matrix $M$ (in which case we sometimes call it an $M$-gridding of $\pi$) if $\pi([c_k,c_{k+1})\times[r_\ell,r_{\ell+1}))$ is increasing whenever $M_{k,\ell}=1$, decreasing whenever $M_{k,\ell}=-1$, and empty whenever $M_{k,\ell}=0$.  The {\it (monotone) grid class of $M$\/}, written $\Grid(M)$, consists of all permutations which possess a gridding compatible with $M$.  Figure~\ref{fig-grid-geom-three-pane} shows an example.

As illustrated by Murphy and Vatter~\cite{murphy:profile-classes:}, monotone grid classes can display chaotic and unstructured behaviour.  However, it has recently been shown that these classes contain subclasses with especially amenable structure.  To define these subclasses, consider the point set in $\mathbb{R}^2$ (called the \emph{standard figure} of the $\zpm$ matrix $M$) consisting of cells $C_{kl}$ whose contents are:
\begin{itemize}
\item the line segment from $(k-1,\ell-1)$ to $(k,\ell)$ if $M_{k,\ell}=1$ or
\item the line segment from $(k-1,\ell)$ to $(k,\ell-1)$ if $M_{k,\ell}=-1$ or
\item empty if $M_{kl}=0$.
\end{itemize}
The \emph{geometric grid class} of $M$, denoted by $\Geom(M)$, is then the set of all permutations that can be drawn on this figure in the following manner.  Choose $n$ points in the figure, no two on a common horizontal or vertical line.  Then label the points from $1$ to $n$ from bottom to top and record these labels reading left to right.  The centre pane of Figure~\ref{fig-grid-geom-three-pane} shows a permutation from a geometric grid class, while the right pane demonstrates that $2413$ is not in this geometric grid class.

It sometimes happens that $\Grid(M)=\Geom(M)$; to characterise this phenomenon, we need to introduce a graph.  The \emph{row-column graph} of a $t\times u$ matrix $M$ is the bipartite graph on the vertices $x_1$, $\dots$, $x_t$, $y_1$, $\dots$, $y_u$ where $x_k$ is adjacent to $y_\ell$ if and only if $M_{k,\ell}\neq 0$.  Albert, Atkinson, Bouvel, Ru\v{s}kuc, and Vatter~{\cite[Theorem 3.2]{albert:geometric-grid-:}} showed that $\Grid(M)=\Geom(M)$ if and only if the row-column graph of $M$ is a forest (in this case we say that $M$ \emph{is} a forest).  As it happens, all gridding matrices encountered in this paper are forests.

Geometric grid classes are especially tractable because their elements can be encoded by words over a finite alphabet, and for the rest of this section we describe this encoding and its properties.  We say that a $\zpm$ matrix $M$ of size $t\times u$ is a \emph{partial multiplication matrix} if there exist \emph{column and row signs}
\[
f_1,\ldots,f_t,g_1,\ldots,g_u\in \{1,-1\}
\]
such that every entry $M_{k,\ell}$ is equal to either $f_kg_\ell$ or $0$.  It is not hard to prove that every geometric grid class is equal to $\Geom(M)$ for a partial multiplication matrix $M$, and this is especially trivial for forests.

\begin{figure}
\begin{center}
\begin{tabular}{ccccc}

\psset{xunit=0.015in, yunit=0.015in}
\psset{linewidth=0.005in}
\begin{pspicture}(0,0)(100,100)
\psline[linecolor=darkgray,linestyle=solid,linewidth=0.02in]{c-c}(35,0)(35,100)
\psline[linecolor=darkgray,linestyle=solid,linewidth=0.02in]{c-c}(0,45)(100,45)
\psaxes[dy=10,Dy=1,dx=10,Dx=1,tickstyle=bottom,showorigin=false,labels=none](0,0)(98,98)
\pscircle*(10,20){0.04in}
\pscircle*(20,80){0.04in}
\pscircle*(30,60){0.04in}
\pscircle*(40,40){0.04in}
\pscircle*(50,30){0.04in}
\pscircle*(60,50){0.04in}
\pscircle*(70,10){0.04in}
\pscircle*(80,70){0.04in}
\pscircle*(90,90){0.04in}
\end{pspicture}

&\rule{0.2in}{0pt}&

\psset{xunit=0.015in, yunit=0.015in}
\psset{linewidth=0.005in}
\begin{pspicture}(0,0)(100,100)
\psline[linecolor=black,linestyle=solid,linewidth=0.02in](0,0)(100,100)
\psline[linecolor=black,linestyle=solid,linewidth=0.02in](0,100)(100,0)
\psline[linecolor=darkgray,linestyle=solid,linewidth=0.02in]{c-c}(50,0)(50,100)
\psline[linecolor=darkgray,linestyle=solid,linewidth=0.02in]{c-c}(0,50)(100,50)
\psline[linecolor=darkgray,linestyle=solid,linewidth=0.02in]{c-c}(0,0)(100,0)(100,100)(0,100)(0,0)
\pscircle*(10,10){0.04in}
\pscircle*(20,80){0.04in}
\pscircle*(30,70){0.04in}
\pscircle*(40,40){0.04in}
\pscircle*(60,60){0.04in}
\pscircle*(70,30){0.04in}
\pscircle*(80,20){0.04in}
\pscircle*(90,90){0.04in}
\end{pspicture}

&\rule{0.2in}{0pt}&

\psset{xunit=0.015in, yunit=0.015in}
\psset{linewidth=0.005in}
\begin{pspicture}(0,0)(100,100)
\psline[linecolor=black,linestyle=dashed,linewidth=0.01in](20,20)(100,20)
\psline[linecolor=black,linestyle=dashed,linewidth=0.01in](20,20)(20,100)
\psline[linecolor=black,linestyle=dashed,linewidth=0.01in](25,75)(100,75)
\psline[linecolor=black,linestyle=dashed,linewidth=0.01in](70,70)(70,0)
\psline[linecolor=black,linestyle=solid,linewidth=0.02in](0,0)(100,100)
\psline[linecolor=black,linestyle=solid,linewidth=0.02in](0,100)(100,0)
\psline[linecolor=darkgray,linestyle=solid,linewidth=0.02in]{c-c}(50,0)(50,100)
\psline[linecolor=darkgray,linestyle=solid,linewidth=0.02in]{c-c}(0,50)(100,50)
\psline[linecolor=darkgray,linestyle=solid,linewidth=0.02in]{c-c}(0,0)(100,0)(100,100)(0,100)(0,0)
\pscircle*(20,20){0.04in}
\pscircle*(25,75){0.04in}
\pscircle*(70,70){0.04in}
\pscircle*(60,10){0.04in}
\end{pspicture}

\end{tabular}
\end{center}
\caption[]{The permutation $286435179$ lies in the grid class of the matrix
$$
M=\fnmatrix{rr}{-1&1\\1&-1},
$$
as the gridding on the left demonstrates.  The figure in the center shows that the permutation $17645328$ lies in $\Geom(M)$.  Finally, the figure on the right shows that $2413$ does not lie in the geometric grid class of $M$ (although it \emph{does} lie in $\Grid(M)$): traveling clockwise from  $2$, we see that  $4$ must lie closer to the centre than $2$,  $3$ must lie closer to the centre than  $4$, but then  $1$ must lie closer to the centre than  $3$ but further from  the centre than  $2$.}
\label{fig-grid-geom-three-pane}
\end{figure}

The column and row signs essentially specify an order in which the monotone entries in a cell of a gridded permutation should be read.  Cells corresponding to $M_{k\ell}=f_kg_{\ell}$ are read from left to right (respectively right to left) if $f_k=1$ (respectively $f_k=-1$) and bottom to top (respectively top to bottom) if $g_{\ell}=1$ (respectively $g_{\ell}=-1$).    These directions are sometimes marked on our diagrams.  The {\em base point} of a cell is the corner from which its reading begins.

To describe the encoding of $\Geom(M)$ we introduce a \emph{cell alphabet} $\Sigma$ associated to $M$ which consists of a unique letter $a_{kl}$ for each nonempty cell $C_{kl}$ of the standard figure of $M$.  Then, to every word $w=w_1\cdots w_n\in\Sigma^\ast$ we associate a permutation $\bij(w)$.  First we choose arbitrary distances
\[
0<d_1<\cdots<d_n<1.
\]
For each $1\le i\le n$, we choose a point $p_i$ corresponding to $w_i$ in the following manner.  Let $w_i=a_{k\ell}$; the point $p_i$ is chosen from the line segment in cell $C_{k,\ell}$, at infinity-norm distance $d_i$ from the base point of this cell.  Finally, $\bij(w)$ denotes the permutation defined by the set $\{p_1,\dots,p_n\}$ of points.  It can be seen that $\bij$ does not depend on the particular choice of $d_1,\dots,d_n$, and thus $\bij\st\Sigma^\ast\to \Geom(M)$ is a well-defined mapping.

The mapping $\bij$ is many-to-one, and so for enumerative applications we must restrict its domain to a set $L\subseteq\Sigma^\ast$ on which $\bij$ is injective.  We seek to choose $L$ to be a \emph{regular language}.  The regular languages are those that can be obtained from the empty language and the singleton languages using the operations of union, concatenation, and Kleene star (where $K^*$ is the set of all concatenations of 0 or more words from $K$).
%
Alternatively, regular languages can also be characterised as those accepted by deterministic finite state automata.  From this latter viewpoint it follows (e.g., by the transfer matrix method) that regular languages have rational generating functions (either when enumerated by length, or with a separate variable $x_a$ for each $a \in \Sigma$). We refer readers to~\cite[Section I.4 and Appendix A.7]{flajolet:analytic-combin:} for more information on regular languages. 


The following theorem from \cite{albert:geometric-grid-:} demonstrates the connection between subclasses of  geometric grid classes and regular languages. Essentially, it says that all such classes are extremely well behaved.

\begin{theorem}
\label{thm-geom-griddable-all}
Suppose that $\C\subseteq\Geom(M)$ is a permutation class and $M$ is a partial multiplication matrix with cell alphabet $\Sigma$.  Then the following hold:
\begin{enumerate}
\item[(i)] $\C$ is partially well-ordered.
\item[(ii)] $\C$ is finitely based.
\item[(iii)] There is a regular language $L\subseteq\Sigma^\ast$ such that the mapping $\bij\st L\rightarrow\C$ is a bijection.
\item[(iv)] There is a regular language $L_S$, contained in the regular language $L$ from (iii), such that the mapping $\bij$ is a bijection between $L_S$ and the simple permutations in $\C$.
\end{enumerate}
\end{theorem}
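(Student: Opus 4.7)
The plan is to build up the four parts in the order (iii)-for-$\Geom(M)$, (i), (ii), (iii), (iv). The fundamental tool throughout is the encoding $\bij\st\Sigma^\ast\to\Geom(M)$, together with its crucial monotonicity property: if $v$ is a subword of $w$ (obtained by deleting some letters), then $\bij(v)$ is a subpermutation of $\bij(w)$, because deleting letters simply removes the corresponding points from the geometric realisation.

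As a first step I would construct a regular language $L_0\subseteq\Sigma^\ast$ such that $\bij\st L_0\to\Geom(M)$ is a bijection. Since $\bij$ is many-to-one, the plan is to fix a canonical representative for each permutation (for instance the lexicographically smallest preimage) and to observe that the ambiguities in $\bij$ are all \emph{local}, in the sense that two consecutive letters $w_iw_{i+1}$ of $w$ can be swapped without altering $\bij(w)$ whenever the corresponding cells are sufficiently independent. Canonicality can therefore be enforced by forbidding a finite list of two-letter substrings, which is a regular condition.

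Part (i) now follows from Higman's theorem, which asserts that the subword order on $\Sigma^\ast$ is a well-quasi-order: any antichain in $\Geom(M)$, and hence in any subclass $\C$, pulls back through the surjective monotone map $\bij$ to an antichain in $\Sigma^\ast$, which must be finite. Part (ii) follows from (i) by a standard argument: in a partially well-ordered permutation class there can be no infinite antichain of minimal forbidden patterns, for if there were, a subword-trimming argument would extract from it an infinite antichain sitting inside $\C$ itself. With (i) and (ii) available, part (iii) for a general subclass $\C=\Av(B)$ is obtained by setting $L=L_0\cap\bij^{-1}(\C)$; for each of the finitely many $b\in B$ the set of $w\in L_0$ with $b\le\bij(w)$ is recognised by a nondeterministic automaton that guesses which letters of $w$ form a copy of $b$ and verifies the required geometric constraints cell by cell, and $L$ is the complement within $L_0$ of the finite union of these languages.

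For part (iv), non-simplicity must be shown to be a regular condition on $L$. A permutation fails to be simple exactly when it has a proper interval, and in the geometric grid setting the possible intervals are constrained by the forest structure of $M$ so that their presence can be detected by an automaton that nondeterministically guesses the endpoints of a candidate interval and tracks, in finitely many states, the cells through which it passes; complementing within $L$ yields $L_S$. The main obstacle throughout is the very first step: justifying that the fibres of $\bij$ are generated by local commutations, so that $L_0$ can be cut out by forbidding finitely many factors, and then that pattern containment likewise reduces to local conditions. Both rest on the rigidity provided by the forest/partial-multiplication structure of $M$, which is what prevents the geometric realisation from admitting the long-range ambiguities that would wreck regularity.
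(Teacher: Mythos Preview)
First, note that the paper does not actually prove this theorem: it is quoted from \cite{albert:geometric-grid-:}, and the paper even remarks that the proof there is nonconstructive. So there is no ``paper's own proof'' to compare against; what I can do is assess your sketch on its merits, relative to the argument in \cite{albert:geometric-grid-:} that the paper alludes to.

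Your outline is broadly along the right lines for parts (i), (iii), and (iv), but there are two genuine gaps.

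\medskip
\noindent\textbf{Gap in the construction of $L_0$.} You treat the non-injectivity of $\bij$ as entirely a matter of local commutations between adjacent letters. That handles only one of the two sources of ambiguity the paper itself identifies (labelled (G1) and (G2) in Section~\ref{sec-4312-3142}). The second source, (G2), is that a single permutation may admit several distinct $M$-griddings, corresponding to genuinely different placements of the column and row division lines; these are not related by transpositions of letters at all. Forbidding a finite list of two-letter factors therefore gives a bijection with \emph{gridded} permutations, not with permutations. In \cite{albert:geometric-grid-:} this second ambiguity is resolved by a separate (and, as the present paper notes, nonconstructive) argument; your sketch does not address it.

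\medskip
\noindent\textbf{Gap in part (ii).} Your deduction of finite basis from partial well-order is not valid as stated. The basis elements of $\C$ lie \emph{outside} $\C$, so the fact that $\C$ contains no infinite antichain says nothing directly about them. Your ``subword-trimming'' suggestion --- delete a point from each basis element to land in $\C$ --- does produce elements of $\C$, but there is no reason the resulting family should remain an antichain. The argument that actually works (and is the one used in \cite{albert:geometric-grid-:}) observes that every basis element of $\C$ is a one-point extension of some permutation in $\Geom(M)$, and that all one-point extensions of $\Geom(M)$ lie in $\Geom(M')$ for a suitably enlarged matrix $M'$; since $\Geom(M')$ is itself partially well-ordered by part (i), the basis (being an antichain in $\Geom(M')$) is finite.

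\medskip
Your sketches for (iii) and (iv) are in the right spirit --- recognising containment of each fixed basis pattern with a nondeterministic automaton, and recognising the existence of a proper interval by guessing its extent --- and these are indeed the ideas used in \cite{albert:geometric-grid-:}. But both rest on having a correct $L_0$, so the first gap above propagates.
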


Note that the proof of Theorem~\ref{thm-geom-griddable-all} is nonconstructive, so while we use the encoding $\bij$ throughout this work, we construct the regular languages we use from first principles.

Albert, Atkinson, and Brignall~\cite{albert:the-enumeration:2143:4231, albert:the-enumeration:3} demonstrate four concrete examples of using these techniques to enumerate $2\times 4$ classes.

The examples considered in this paper are inflations of geometric grid classes.  The theoretical issues of such classes were studied by Albert, Ru\v{s}kuc, and Vatter~\cite{albert:inflations-of-g:}, who proved that every subclass of $\langle\Geom(M)\rangle$ has an algebraic generating function (essentially by showing that it is in bijection with a context-free language).  From this perspective, Section~\ref{sec-4213-3142} considers the case of a class contained in $\langle\Geom(\begin{footnotesize}\begin{array}{rr}1&1\end{array}\end{footnotesize})\rangle$, while the next two sections consider classes whose simple permutations are contained in more complicated geometric grid classes.

\section{Example \#2: Avoiding $4312$ and $3142$}\label{sec-4312-3142}

We begin our next example with a characterisation of its simple permutations.

\begin{proposition}\label{prop-4312-3142-simples}
The simple permutations of $\Av(4312,3142)$ and $\Geom\fnmatrix{rrr}{0&1&1\\1&0&-1}$ coincide.
\end{proposition}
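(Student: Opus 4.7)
The proposition asserts equality of two sets of simple permutations, so we prove both inclusions.

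\emph{Direction 1} (simples of $\Geom(M)$ avoid $4312$ and $3142$). Since $\Geom(M)$ is downward closed, it suffices to show that $\Geom(M) \subseteq \Av(4312, 3142)$, which reduces to verifying that neither $4312$ nor $3142$ itself lies in $\Geom(M)$. The standard figure of $M$ has four nonempty cells: increasing segments in $C_{1,1}$, $C_{2,2}$, $C_{3,2}$ and a decreasing segment in $C_{3,1}$. In each case a short case analysis on which cells the four points of the pattern can inhabit (subject to the left-to-right ordering of columns and bottom-to-top ordering of rows) leads to a contradiction with the monotonicity of some cell. For $4312$ one finds that the two low entries are necessarily both assigned to $C_{3,1}$ in increasing position order, violating the decreasing orientation of that cell; $3142$ fails by an analogous check.

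\emph{Direction 2} (simples of $\Av(4312, 3142)$ lie in $\Geom(M)$). Given a simple $\pi \in \Av(4312, 3142)$, the goal is to construct an $M$-gridding. Let $p$ denote the position of the maximum entry $n$ and write $\pi = \sigma \, n \, \tau$.

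For the suffix $\tau$: for each $j > p$ the pair $(p,j)$ is a descent, so repeated application of $4312$-avoidance forces the entries of $\tau$ below any earlier entry of $\tau$ to be decreasing. A short bookkeeping argument, together with $3142$-avoidance to control interaction with $\sigma$, then yields a threshold value $h$ such that the entries of $\tau$ above $h$ form an increasing subsequence and the entries at or below $h$ form a decreasing subsequence. These populate cells $C_{3,2}$ and $C_{3,1}$ respectively.

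For the prefix $\sigma$: the claim is that $\sigma$ splits into an initial increasing run of values $\le h$ followed by an increasing run of values in $(h, n)$, destined for cells $C_{1,1}$ and $C_{2,2}$. Here $4312$-avoidance precludes descents within either piece, $3142$-avoidance prevents any low value from appearing to the right of a high value in $\sigma$, and simplicity of $\pi$ rules out any finer partition or stray interleaving. Placing column dividers at the split point of $\sigma$ and immediately after $n$, and the row divider at value $h$, produces an $M$-gridding of $\pi$, which is verified routinely.

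\emph{Main obstacle.} The delicate step is Direction 2, specifically establishing that $\sigma$ and $\tau$ share a common threshold $h$ and that $\sigma$ really decomposes into exactly two increasing pieces. The two avoidance conditions play distinct, complementary roles---$4312$ primarily policing descents within each slab, $3142$ primarily policing interleavings between $\sigma$ and $\tau$---and it is the simplicity of $\pi$ that forces the clean two-part split; without it, $\pi$ could contain configurations (inflations of wedge-like patterns) that would obstruct the gridding or require a finer partition.
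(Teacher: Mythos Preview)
Your overall plan---split at the maximum, control the suffix $\tau$ and the prefix $\sigma$ separately, then read off an $M$-gridding---is exactly the paper's strategy.  Direction~1 is fine.  The substantive gap is in Direction~2, and it is precisely the step you flag as ``bookkeeping.''

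For the suffix: from $4312$-avoidance you correctly deduce that $\tau$ avoids $312$.  But $\Av(312)$ is far larger than the wedge class; to obtain the threshold structure you claim (high part increasing, low part decreasing) you also need $\tau$ to avoid $132$, and this does \emph{not} follow from any short bookkeeping or from $3142$-avoidance alone.  In the paper this is the heart of the argument: one assumes a $132$ pattern $acb$ in $\tau$, chosen extremally, and uses simplicity to force a separator $d$ of the box $\{b,c\}$, after which the box on $\{b,c,d\}$ is shown to be an unsplittable proper interval---a contradiction.  Both basis elements are used only to rule out regions in this box-chasing; the engine is simplicity, and the argument is not routine.

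For the prefix: the paper proves the stronger statement that $\sigma$ is \emph{globally} increasing (no $21$ at all), again by an extremal-choice box-splitting contradiction using simplicity.  Your weaker two-piece claim would suffice, but your stated reasons do not: ``$4312$-avoidance precludes descents within either piece'' has no obvious justification once the pieces are defined via the suffix threshold $h$, and ``$3142$-avoidance prevents any low value right of a high value in $\sigma$'' fails in general, since the required `$2$' in $\tau$ with value between your low and high entries need not exist.  In short, the avoidance conditions play only a supporting role in both halves; the decisive tool is repeated use of ``this box must be split, but every candidate splitter creates a larger unsplittable box,'' and that mechanism is absent from your sketch.
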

\begin{proof}
First observe that
\[
\Grid\fnmatrix{rrr}{0&1&1\\1&0&-1}
=
\Geom\fnmatrix{rrr}{0&1&1\\1&0&-1}
\subseteq
\Av(4312, 3142),
\]
so it suffices to prove that the simple permutations in $\Av(4312, 3142)$ are contained this grid class. Specifically, we will show that in any simple permutation of $\Av(4312, 3142)$ the entries that follow the maximum make up a ``wedge permutation'' oriented as $<$  (which is equivalent to avoiding both 132 and 312), and those preceding the maximum form an increasing sequence.

So, let a simple permutation $\pi \in \Av(4312, 3142)$ be given. Because $\pi$ avoids 4312 there can be no 312 pattern after its maximum so, for the sake of contradiction, assume that there is a $132$ pattern. Specifically, choose such a pattern $acb$ where $a$ is as low as possible, and $c$ is as high as possible (for the chosen $a$). This yields the situation depicted on the left in Figure \ref{fig-ex2-simple-no-132-after-max}.  Now, in order that the cell bounded by $\{b,c\}$ not form an interval, there must be some entry $d$ in the cell immediately to its left.  Taking the leftmost such entry yields the diagram on the right in Figure \ref{fig-ex2-simple-no-132-after-max}. In this diagram we see that the entries of $\pi$ lying in the box bounded by $\{b,c,d\}$ (including those three entries) form a proper interval, contradicting the simplicity of $\pi$.

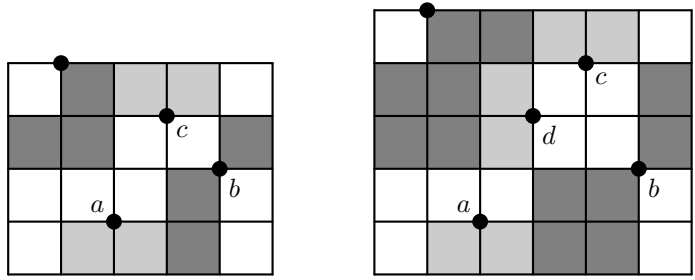
\begin{figure}
\begin{center}
\begin{tabular}{ccc}

\psset{xunit=2pt, yunit=2pt, runit=1.5pt}
\begin{pspicture}(0,0)(50,40)
\pspolygon*[linecolor=darkgray](0,20)(10,20)(10,30)(0,30)
\pspolygon*[linecolor=lightgray](10,0)(20,0)(20,10)(10,10)
\pspolygon*[linecolor=darkgray](10,20)(20,20)(20,30)(10,30)
\pspolygon*[linecolor=darkgray](10,30)(20,30)(20,40)(10,40)
\pspolygon*[linecolor=lightgray](20,0)(30,0)(30,10)(20,10)
\pspolygon*[linecolor=lightgray](20,30)(30,30)(30,40)(20,40)
\pspolygon*[linecolor=darkgray](30,0)(40,0)(40,10)(30,10)
\pspolygon*[linecolor=darkgray](30,10)(40,10)(40,20)(30,20)
\pspolygon*[linecolor=lightgray](30,30)(40,30)(40,40)(30,40)
\pspolygon*[linecolor=darkgray](40,20)(50,20)(50,30)(40,30)
\pscircle*[linecolor=black](10,40){2.0}
\pscircle*[linecolor=black](20,10){2.0}
\uput[135](20,10){$a$}
\pscircle*[linecolor=black](30,30){2.0}
\uput[-45](30,30){$c$}
\pscircle*[linecolor=black](40,20){2.0}
\uput[-45](40,20){$b$}
\psline{c-c}(0,0)(0,40)
\psline{c-c}(0,0)(50,0)
\psline{c-c}(10,0)(10,40)
\psline{c-c}(0,10)(50,10)
\psline{c-c}(20,0)(20,40)
\psline{c-c}(0,20)(50,20)
\psline{c-c}(30,0)(30,40)
\psline{c-c}(0,30)(50,30)
\psline{c-c}(40,0)(40,40)
\psline{c-c}(0,40)(50,40)
\psline{c-c}(50,0)(50,40)
\end{pspicture}

&\rule{0.2in}{0pt}&

\psset{xunit=2pt, yunit=2pt, runit=1.5pt}
\begin{pspicture}(0,0)(60,50)
\pspolygon*[linecolor=darkgray](0,20)(10,20)(10,30)(0,30)
\pspolygon*[linecolor=darkgray](0,30)(10,30)(10,40)(0,40)
\pspolygon*[linecolor=lightgray](10,0)(20,0)(20,10)(10,10)
\pspolygon*[linecolor=darkgray](10,20)(20,20)(20,30)(10,30)
\pspolygon*[linecolor=darkgray](10,30)(20,30)(20,40)(10,40)
\pspolygon*[linecolor=darkgray](10,40)(20,40)(20,50)(10,50)
\pspolygon*[linecolor=lightgray](20,0)(30,0)(30,10)(20,10)
\pspolygon*[linecolor=lightgray](20,20)(30,20)(30,30)(20,30)
\pspolygon*[linecolor=lightgray](20,30)(30,30)(30,40)(20,40)
\pspolygon*[linecolor=darkgray](20,40)(30,40)(30,50)(20,50)
\pspolygon*[linecolor=darkgray](30,0)(40,0)(40,10)(30,10)
\pspolygon*[linecolor=darkgray](30,10)(40,10)(40,20)(30,20)
\pspolygon*[linecolor=lightgray](30,40)(40,40)(40,50)(30,50)
\pspolygon*[linecolor=darkgray](40,0)(50,0)(50,10)(40,10)
\pspolygon*[linecolor=darkgray](40,10)(50,10)(50,20)(40,20)
\pspolygon*[linecolor=lightgray](40,40)(50,40)(50,50)(40,50)
\pspolygon*[linecolor=darkgray](50,20)(60,20)(60,30)(50,30)
\pspolygon*[linecolor=darkgray](50,30)(60,30)(60,40)(50,40)
\pscircle*[linecolor=black](10,50){2.0}
\pscircle*[linecolor=black](20,10){2.0}
\uput[135](20,10){$a$}
\pscircle*[linecolor=black](30,30){2.0}
\uput[-45](30,30){$d$}
\pscircle*[linecolor=black](40,40){2.0}
\uput[-45](40,40){$c$}
\pscircle*[linecolor=black](50,20){2.0}
\uput[-45](50,20){$b$}
\psline{c-c}(0,0)(0,50)
\psline{c-c}(0,0)(60,0)
\psline{c-c}(10,0)(10,50)
\psline{c-c}(0,10)(60,10)
\psline{c-c}(20,0)(20,50)
\psline{c-c}(0,20)(60,20)
\psline{c-c}(30,0)(30,50)
\psline{c-c}(0,30)(60,30)
\psline{c-c}(40,0)(40,50)
\psline{c-c}(0,40)(60,40)
\psline{c-c}(50,0)(50,50)
\psline{c-c}(0,50)(60,50)
\psline{c-c}(60,0)(60,50)
\end{pspicture}

\end{tabular}
\end{center}
\caption[]{The illustration that a simple permutation of $\Av(4312, 3142)$ can have no 132 pattern after its maximum.  Dark grey regions cannot be occupied due to the avoidance conditions, lighter regions because of choices made when the entries were selected (topmost, leftmost, etc.).}
\label{fig-ex2-simple-no-132-after-max}
\end{figure}

We can now argue in a similar fashion that the entries preceding the maximum entry of $\pi$ form an increasing sequence, i.e., that there cannot be a $21$ pattern before the maximum entry of $\pi$.  Suppose to the contrary that there were one, and choose such a pattern $ba$ where $b$ is as high as possible, and $a$ is as low as possible (for the chosen $b$). Now the cell defined by $\{a,b\}$ must be split either to the left or to the right.  The picture on the left of Figure~\ref{fig-ex2-simple-no-21-after-max} shows that $\{a,b\}$ cannot be split solely to the right, as then taking $c$ to be the rightmost such separator we see that $\{a,b,c\}$ would lie in a proper interval.  Similarly, $\{a,b\}$ cannot be split solely to the left.  Thus $\{a,b\}$ must be split on both the left and the right.  Now, taking $c$ to be the rightmost separator and $d$ the leftmost separator, we have two cases, depicted in the centre and right of Figure~\ref{fig-ex2-simple-no-21-after-max}.  In both cases it is clear that $\{a,b,c,d\}$ is contained in a proper interval, and this contradiction completes the proof.
\end{proof}

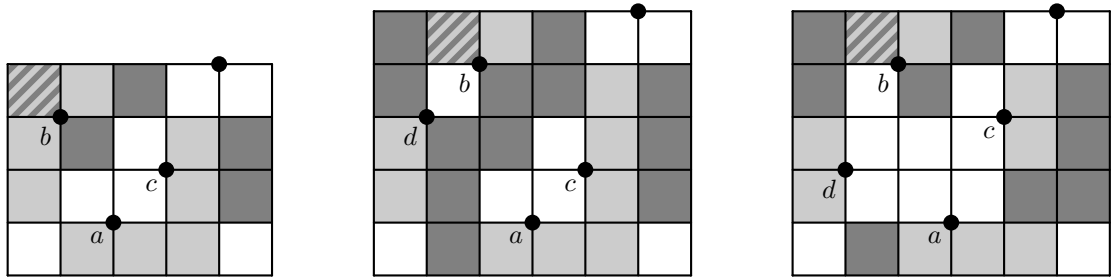
\begin{figure}
\begin{center}
\begin{tabular}{ccccc}

\psset{xunit=2pt, yunit=2pt, runit=1.5pt}
\begin{pspicture}(0,0)(50,40)
\pspolygon*[linecolor=lightgray](0,10)(10,10)(10,20)(0,20)
\pspolygon*[linecolor=lightgray](0,20)(10,20)(10,30)(0,30)
\pspolygon*[linecolor=lightgray](0,30)(10,30)(10,40)(0,40)
\psframe[linewidth=0,linecolor=white,fillstyle=hlines,hatchcolor=darkgray,hatchwidth=1.5,hatchsep=1.5](0,30)(10,40)
\pspolygon*[linecolor=lightgray](10,0)(20,0)(20,10)(10,10)
\pspolygon*[linecolor=darkgray](10,20)(20,20)(20,30)(10,30)
\pspolygon*[linecolor=lightgray](10,30)(20,30)(20,40)(10,40)
\pspolygon*[linecolor=lightgray](20,0)(30,0)(30,10)(20,10)
\pspolygon*[linecolor=darkgray](20,30)(30,30)(30,40)(20,40)
\pspolygon*[linecolor=lightgray](30,0)(40,0)(40,10)(30,10)
\pspolygon*[linecolor=lightgray](30,10)(40,10)(40,20)(30,20)
\pspolygon*[linecolor=lightgray](30,20)(40,20)(40,30)(30,30)
\pspolygon*[linecolor=darkgray](40,10)(50,10)(50,20)(40,20)
\pspolygon*[linecolor=darkgray](40,20)(50,20)(50,30)(40,30)
\pscircle*[linecolor=black](10,30){2.0}
\uput[-135](10,30){$b$}
\pscircle*[linecolor=black](20,10){2.0}
\uput[-135](20,10){$a$}
\pscircle*[linecolor=black](30,20){2.0}
\uput[-135](30,20){$c$}
\pscircle*[linecolor=black](40,40){2.0}
\psline{c-c}(0,0)(0,40)
\psline{c-c}(0,0)(50,0)
\psline{c-c}(10,0)(10,40)
\psline{c-c}(0,10)(50,10)
\psline{c-c}(20,0)(20,40)
\psline{c-c}(0,20)(50,20)
\psline{c-c}(30,0)(30,40)
\psline{c-c}(0,30)(50,30)
\psline{c-c}(40,0)(40,40)
\psline{c-c}(0,40)(50,40)
\psline{c-c}(50,0)(50,40)
\end{pspicture}

&\rule{0.2in}{0pt}&

\psset{xunit=2pt, yunit=2pt, runit=1.5pt}
\begin{pspicture}(0,0)(60,50)
\pspolygon*[linecolor=lightgray](0,10)(10,10)(10,20)(0,20)
\pspolygon*[linecolor=lightgray](0,20)(10,20)(10,30)(0,30)
\pspolygon*[linecolor=darkgray](0,30)(10,30)(10,40)(0,40)
\pspolygon*[linecolor=darkgray](0,40)(10,40)(10,50)(0,50)
\pspolygon*[linecolor=darkgray](10,0)(20,0)(20,10)(10,10)
\pspolygon*[linecolor=darkgray](10,10)(20,10)(20,20)(10,20)
\pspolygon*[linecolor=darkgray](10,20)(20,20)(20,30)(10,30)
\pspolygon*[linecolor=lightgray](10,40)(20,40)(20,50)(10,50)
\psframe[linewidth=0,linecolor=white,fillstyle=hlines,hatchcolor=darkgray,hatchwidth=1.5,hatchsep=1.5](10,40)(20,50)
\pspolygon*[linecolor=lightgray](20,0)(30,0)(30,10)(20,10)
\pspolygon*[linecolor=darkgray](20,20)(30,20)(30,30)(20,30)
\pspolygon*[linecolor=darkgray](20,30)(30,30)(30,40)(20,40)
\pspolygon*[linecolor=lightgray](20,40)(30,40)(30,50)(20,50)
\pspolygon*[linecolor=lightgray](30,0)(40,0)(40,10)(30,10)
\pspolygon*[linecolor=darkgray](30,30)(40,30)(40,40)(30,40)
\pspolygon*[linecolor=darkgray](30,40)(40,40)(40,50)(30,50)
\pspolygon*[linecolor=lightgray](40,0)(50,0)(50,10)(40,10)
\pspolygon*[linecolor=lightgray](40,10)(50,10)(50,20)(40,20)
\pspolygon*[linecolor=lightgray](40,20)(50,20)(50,30)(40,30)
\pspolygon*[linecolor=lightgray](40,30)(50,30)(50,40)(40,40)
\pspolygon*[linecolor=darkgray](50,10)(60,10)(60,20)(50,20)
\pspolygon*[linecolor=darkgray](50,20)(60,20)(60,30)(50,30)
\pspolygon*[linecolor=darkgray](50,30)(60,30)(60,40)(50,40)
\pscircle*[linecolor=black](10,30){2.0}
\uput[-135](10,30){$d$}
\pscircle*[linecolor=black](20,40){2.0}
\uput[-135](20,40){$b$}
\pscircle*[linecolor=black](30,10){2.0}
\uput[-135](30,10){$a$}
\pscircle*[linecolor=black](40,20){2.0}
\uput[-135](40,20){$c$}
\pscircle*[linecolor=black](50,50){2.0}
\psline{c-c}(0,0)(0,50)
\psline{c-c}(0,0)(60,0)
\psline{c-c}(10,0)(10,50)
\psline{c-c}(0,10)(60,10)
\psline{c-c}(20,0)(20,50)
\psline{c-c}(0,20)(60,20)
\psline{c-c}(30,0)(30,50)
\psline{c-c}(0,30)(60,30)
\psline{c-c}(40,0)(40,50)
\psline{c-c}(0,40)(60,40)
\psline{c-c}(50,0)(50,50)
\psline{c-c}(0,50)(60,50)
\psline{c-c}(60,0)(60,50)
\end{pspicture}

&\rule{0.2in}{0pt}&

\psset{xunit=2pt, yunit=2pt, runit=1.5pt}
\begin{pspicture}(0,0)(60,50)
\pspolygon*[linecolor=lightgray](0,10)(10,10)(10,20)(0,20)
\pspolygon*[linecolor=lightgray](0,20)(10,20)(10,30)(0,30)
\pspolygon*[linecolor=darkgray](0,30)(10,30)(10,40)(0,40)
\pspolygon*[linecolor=darkgray](0,40)(10,40)(10,50)(0,50)
\pspolygon*[linecolor=darkgray](10,0)(20,0)(20,10)(10,10)
\pspolygon*[linecolor=lightgray](10,40)(20,40)(20,50)(10,50)
\psframe[linewidth=0,linecolor=white,fillstyle=hlines,hatchcolor=darkgray,hatchwidth=1.5,hatchsep=1.5](10,40)(20,50)
\pspolygon*[linecolor=lightgray](20,0)(30,0)(30,10)(20,10)
\pspolygon*[linecolor=darkgray](20,30)(30,30)(30,40)(20,40)
\pspolygon*[linecolor=lightgray](20,40)(30,40)(30,50)(20,50)
\pspolygon*[linecolor=lightgray](30,0)(40,0)(40,10)(30,10)
\pspolygon*[linecolor=darkgray](30,40)(40,40)(40,50)(30,50)
\pspolygon*[linecolor=lightgray](40,0)(50,0)(50,10)(40,10)
\pspolygon*[linecolor=darkgray](40,10)(50,10)(50,20)(40,20)
\pspolygon*[linecolor=lightgray](40,20)(50,20)(50,30)(40,30)
\pspolygon*[linecolor=lightgray](40,30)(50,30)(50,40)(40,40)
\pspolygon*[linecolor=darkgray](50,10)(60,10)(60,20)(50,20)
\pspolygon*[linecolor=darkgray](50,20)(60,20)(60,30)(50,30)
\pspolygon*[linecolor=darkgray](50,30)(60,30)(60,40)(50,40)
\pscircle*[linecolor=black](10,20){2.0}
\uput[-135](10,20){$d$}
\pscircle*[linecolor=black](20,40){2.0}
\uput[-135](20,40){$b$}
\pscircle*[linecolor=black](30,10){2.0}
\uput[-135](30,10){$a$}
\pscircle*[linecolor=black](40,30){2.0}
\uput[-135](40,30){$c$}
\pscircle*[linecolor=black](50,50){2.0}
\psline{c-c}(0,0)(0,50)
\psline{c-c}(0,0)(60,0)
\psline{c-c}(10,0)(10,50)
\psline{c-c}(0,10)(60,10)
\psline{c-c}(20,0)(20,50)
\psline{c-c}(0,20)(60,20)
\psline{c-c}(30,0)(30,50)
\psline{c-c}(0,30)(60,30)
\psline{c-c}(40,0)(40,50)
\psline{c-c}(0,40)(60,40)
\psline{c-c}(50,0)(50,50)
\psline{c-c}(0,50)(60,50)
\psline{c-c}(60,0)(60,50)
\end{pspicture}
\end{tabular}
\end{center}
\caption[]{The illustration that a simple permutation of $\Av(4312, 3142)$ can have no 21 pattern before its maximum.  In these pictures, the hatched regions cannot be occupied both by the avoidance conditions and the choices made when the entries were selected.}
\label{fig-ex2-simple-no-21-after-max}
\end{figure}

\begin{figure}
\begin{center}

\psset{xunit=0.02in, yunit=0.02in, runit=1.5pt}
\psset{linewidth=0.005in}
\begin{pspicture}(-3,0)(120,85)
\psline[linecolor=black,linestyle=solid,linewidth=0.02in]{c-c}(0,0)(80,80)
\psline[linecolor=black,linestyle=solid,linewidth=0.02in]{c-c}(80,40)(120,0)
\psline[linecolor=black,linestyle=solid,linewidth=0.02in]{c-c}(80,40)(120,80)
\psline[linecolor=darkgray,linestyle=solid,linewidth=0.02in]{c-c}(0,0)(0,80)
\psline[linecolor=darkgray,linestyle=solid,linewidth=0.02in]{c-c}(40,0)(40,80)
\psline[linecolor=darkgray,linestyle=solid,linewidth=0.02in]{c-c}(80,0)(80,80)
\psline[linecolor=darkgray,linestyle=solid,linewidth=0.02in]{c-c}(120,0)(120,80)
\psline[linecolor=darkgray,linestyle=solid,linewidth=0.02in]{c-c}(0,0)(120,0)
\psline[linecolor=darkgray,linestyle=solid,linewidth=0.02in]{c-c}(0,40)(120,40)
\psline[linecolor=darkgray,linestyle=solid,linewidth=0.02in]{c-c}(0,80)(120,80)
\psline[linecolor=black,linestyle=solid,linewidth=0.01in,arrowsize=0.06in]{<-c}(-3,2)(-3,38)
\psline[linecolor=black,linestyle=solid,linewidth=0.01in,arrowsize=0.06in]{c->}(-3,42)(-3,78)
\psline[linecolor=black,linestyle=solid,linewidth=0.01in,arrowsize=0.06in]{<-c}(2,83)(38,83)
\psline[linecolor=black,linestyle=solid,linewidth=0.01in,arrowsize=0.06in]{c->}(42,83)(78,83)
\psline[linecolor=black,linestyle=solid,linewidth=0.01in,arrowsize=0.06in]{c->}(82,83)(118,83)
\pscircle*(35,35){2.0}
\pscircle*(90,30){2.0}
\pscircle*(25,25){2.0}
\pscircle*(100,60){2.0}
\pscircle*(105,15){2.0}
\pscircle*(110,70){2.0}
\pscircle*(75,75){2.0}
\uput[135](40,0){$\ga$}
\uput[135](80,40){$\gb$}
\uput[45](80,0){$\gc$}
\uput[135](120,40){$\gd$}
\end{pspicture}

\end{center}
\caption{The word $\ga\gc\ga\gd\gc\gd\gb$ is mapped by $\bij$ to the simple permutation $2473516$.}\label{fig-2473516}
\end{figure}

We now consider the encoding $\bij$ over the cell alphabet $\Sigma=\{\ga,\gb,\gc,\gd\}$ as indicated in Figure~\ref{fig-2473516}, which also shows an example of $\bij$.  This mapping is not injective on $\Sigma^\ast$ for the following two reasons.
\begin{itemize}
\item[(G1)] The same gridded permutation may be the image of multiple words (in our example, this occurs because the pairs $\{\ga,\gb\}$, $\{\ga,\gd\}$, and $\{\gb,\gc\}$ ``commute'', i.e., they may be interchanged without affecting the gridded permutation obtained).  A method to handle this issue in general (by appealing to the theory of ``trace monoids'') is presented in \cite[Section 7]{albert:geometric-grid-:}.
\item[(G2)] A given permutation may have several different $M$-griddings.  A (nonconstructive) method to handle this issue in general is presented in \cite[Section 8]{albert:geometric-grid-:}.
\end{itemize}
In the class we are considering, $\Av(4312,3142)$, it is possible to deal with the issues concretely.

First we address (G1).  For any particular gridded permutation, we prefer the lexicographically minimal word encoding it.  For example, suppose that a word contained a factor of the form $\{\gb,\gd\}^+\ga$ (here the $+$ superscript signifies that this portion of the word contains at least one letter).  We could then replace this factor by a factor of the form $\ga\{\gb,\gd\}^+$ and obtain a lexicographically lesser word which is mapped to the same permutation.  Therefore we forbid factors of the form $\{\gb,\gd\}^+\ga$.  The other factor we need to forbid is $\gc\ga^\ast\gb$ (which could be replaced by a factor of the form $\gb\gc\ga^\ast$).

Now we address (G2), which requires us to choose a preferred (geometric) $M$-gridding for every permutation in $\Geom(M)$.  Among all $M$-griddings of a permutation, we prefer the one that has the most entries in the first column, then the most entries in the second column, and then the most entries in the first row.  Thus in terms of column divisions $1=c_1\le c_2\le c_3\le c_4=n+1$ and row divisions $1=r_1\le r_2\le r_3=n+1$, we seek to maximise $c_2$, then $c_3$, and then $r_2$.  The words which correspond to such griddings can now be characterised as those which do not begin with $\ga^\ast\gd$, $\gb$, $\{\ga,\gc\}^\ast\gb$, or $\gd$ and are not of the form $\gc\{\ga,\gc,\gd\}^\ast$.

With this language we may enumerate the grid class itself%
\footnote{This grid class (which, because it can be viewed as a ``juxtaposition'' in the sense of Atkinson~\cite{atkinson:restricted-perm:}, can be shown to have basis $\{2143,3142,4132,4312\})$ has the generating function
\[
\frac{1-6x+11x^2-5x^3}{(1-x)(1-3x)(1-3x+x^2)}.
\]
},
but we are interested instead in the simple permutations.  The additional rules for the words encoding simple permutations of length at least four are:

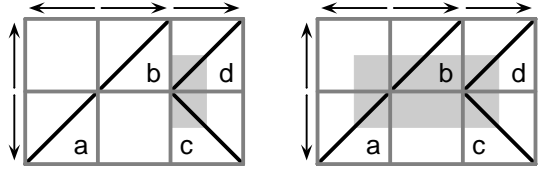
\begin{figure}
\begin{center}
\begin{tabular}{ccc}

\psset{xunit=0.0095in, yunit=0.0095in}
\psset{linewidth=0.005in}
\begin{pspicture}(-6,0)(120,88)
\psframe[linecolor=lightgray,fillstyle=solid,fillcolor=lightgray](80,20)(100,60)
\psline[linecolor=black,linestyle=solid,linewidth=0.02in]{c-c}(0,0)(80,80)
\psline[linecolor=black,linestyle=solid,linewidth=0.02in]{c-c}(80,40)(120,0)
\psline[linecolor=black,linestyle=solid,linewidth=0.02in]{c-c}(80,40)(120,80)
\psline[linecolor=darkgray,linestyle=solid,linewidth=0.02in]{c-c}(0,0)(0,80)
\psline[linecolor=darkgray,linestyle=solid,linewidth=0.02in]{c-c}(40,0)(40,80)
\psline[linecolor=darkgray,linestyle=solid,linewidth=0.02in]{c-c}(80,0)(80,80)
\psline[linecolor=darkgray,linestyle=solid,linewidth=0.02in]{c-c}(120,0)(120,80)
\psline[linecolor=darkgray,linestyle=solid,linewidth=0.02in]{c-c}(0,0)(120,0)
\psline[linecolor=darkgray,linestyle=solid,linewidth=0.02in]{c-c}(0,40)(120,40)
\psline[linecolor=darkgray,linestyle=solid,linewidth=0.02in]{c-c}(0,80)(120,80)
\psline[linecolor=black,linestyle=solid,linewidth=0.01in,arrowsize=0.06in]{<-c}(-6,2)(-6,38)
\psline[linecolor=black,linestyle=solid,linewidth=0.01in,arrowsize=0.06in]{c->}(-6,42)(-6,78)
\psline[linecolor=black,linestyle=solid,linewidth=0.01in,arrowsize=0.06in]{<-c}(2,86)(38,86)
\psline[linecolor=black,linestyle=solid,linewidth=0.01in,arrowsize=0.06in]{c->}(42,86)(78,86)
\psline[linecolor=black,linestyle=solid,linewidth=0.01in,arrowsize=0.06in]{c->}(82,86)(118,86)
\uput[135](40,0){$\ga$}
\uput[135](80,40){$\gb$}
\uput[45](80,0){$\gc$}
\uput[135](120,40){$\gd$}
\end{pspicture}

&&

\psset{xunit=0.0095in, yunit=0.0095in}
\psset{linewidth=0.005in}
\begin{pspicture}(-6,0)(120,88)
\psframe[linecolor=lightgray,fillstyle=solid,fillcolor=lightgray](20,20)(100,60)
\psline[linecolor=black,linestyle=solid,linewidth=0.02in]{c-c}(0,0)(80,80)
\psline[linecolor=black,linestyle=solid,linewidth=0.02in]{c-c}(80,40)(120,0)
\psline[linecolor=black,linestyle=solid,linewidth=0.02in]{c-c}(80,40)(120,80)
\psline[linecolor=darkgray,linestyle=solid,linewidth=0.02in]{c-c}(0,0)(0,80)
\psline[linecolor=darkgray,linestyle=solid,linewidth=0.02in]{c-c}(40,0)(40,80)
\psline[linecolor=darkgray,linestyle=solid,linewidth=0.02in]{c-c}(80,0)(80,80)
\psline[linecolor=darkgray,linestyle=solid,linewidth=0.02in]{c-c}(120,0)(120,80)
\psline[linecolor=darkgray,linestyle=solid,linewidth=0.02in]{c-c}(0,0)(120,0)
\psline[linecolor=darkgray,linestyle=solid,linewidth=0.02in]{c-c}(0,40)(120,40)
\psline[linecolor=darkgray,linestyle=solid,linewidth=0.02in]{c-c}(0,80)(120,80)
\psline[linecolor=black,linestyle=solid,linewidth=0.01in,arrowsize=0.06in]{<-c}(-6,2)(-6,38)
\psline[linecolor=black,linestyle=solid,linewidth=0.01in,arrowsize=0.06in]{c->}(-6,42)(-6,78)
\psline[linecolor=black,linestyle=solid,linewidth=0.01in,arrowsize=0.06in]{<-c}(2,86)(38,86)
\psline[linecolor=black,linestyle=solid,linewidth=0.01in,arrowsize=0.06in]{c->}(42,86)(78,86)
\psline[linecolor=black,linestyle=solid,linewidth=0.01in,arrowsize=0.06in]{c->}(82,86)(118,86)
\uput[135](40,0){$\ga$}
\uput[135](80,40){$\gb$}
\uput[45](80,0){$\gc$}
\uput[135](120,40){$\gd$}
\end{pspicture}

\end{tabular}
\end{center}
\caption{The shaded areas represent possible intervals in elements of the geometric grid class.}
\label{fig-4312-3142-grid-intervals}
\end{figure}

\begin{itemize}
\item To prevent intervals solely contained within an individual cell, we prohibit repetitions $\ga\ga$, $\gb\gb$, $\gc\gc$, or $\gd\gd$ as factors.
\item To prevent intervals of the form shown in the first pane of Figure~\ref{fig-4312-3142-grid-intervals}, we forbid words beginning with $\{\gc,\gd\}^2$.
\item To prevent intervals of the form shown in the second pane of Figure~\ref{fig-4312-3142-grid-intervals}, we forbid words of the form $\{\ga,\gb,\gc,\gd\}^\ast\{\ga,\gc,\gd\}^+$.
\end{itemize}
With these restrictions, we can then use the \textsf{automata} package~\cite{delgado:automata-----a-:} for \textsf{GAP}~\cite{:gap----groups-a:} to count the simple permutations of this grid class%
\footnote{These simple permutations have the generating function
\[
\frac{x+x^2-4x^3-3x^3}{(1+x)(1-2x)},
\]
showing that for $n\ge 3$ the simple permutations in this class are counting by the Jacobsthal numbers (\OEISlink{A001045} in the \OEISref).}.
For future reference, we record that the multivariate generating function for these words of length at least four which begin with $\ga$ is
\[
s(\xa,\xb,\xc,\xd)=\frac{\xa \xb \xc \xd}{1-\xa \xc-\xb \xd-\xc \xd-\xa \xc \xd- \xb \xc \xd},
\]
while the words of length at least four which begin with $\gc$ have multivariate generating function $\xc s(\xa,\xb,\xc,\xd)$.  Note that our rules preclude words encoding simple permutations from beginning with $\gb$ or $\gd$.

Now we characterise the inflations.  Because $3142$ is simple, it will not occur when inflating a $3142$-avoiding permutation by $3142$-avoiding intervals, so we need only avoid $4312$.  Since the class is sum closed, we have that $f_\oplus=f^2/(1+f)$, as in Section~\ref{sec-4213-3142}.  The skew decomposable permutations are a bit more complicated, but divide into a union:
\[
\left(\Av(21)\ominus\Av(312)\right)\cup\left(\Av_{\not\ominus}(4312,3142)\ominus\Av(12)\right),
\]
where $\Av_{\not\ominus}(4312,3142)$ denotes the set of skew indecomposable permutations in this class.  As the intersection of these two is simply $\Av(21)\ominus\Av(12)$, $f_\ominus=mc+(f-f_\ominus)m-m^2$, where
\[
m=\frac{x}{1-x}
\]
denotes the generating function for the nonempty decreasing (or, increasing) permutations.  Solving this shows
\[
f_\ominus
=
\frac{m(f+c-m)}{1+m}.
\]
Inflations of simple permutation of length at least four are a bit more complicated, as there are several cases.  In all such inflations, each entry which corresponds to a $\gb$ may only be inflated by an increasing permutation (but may be inflated by any such permutation), while each entry which corresponds to a $\gd$ may only be inflated by a $312$-avoiding permutation (but may be inflated by any such permutation).  If the word begins with a $\gc$, then it follows that the entry corresponding to the first $\gc$ may be inflated by any permutation in $\Av(312)$, while each subsequent entry corresponding to a $\gc$ may only be inflated by a decreasing permutation.  Otherwise, it follows from our rules that the word must begin with an $\ga$, and there are two cases.  If the entry corresponding to the first $\ga$ is inflated by a permutation containing a descent, then each entry corresponding to a $\gc$ must be inflated with a decreasing permutation.  Otherwise, if the entry corresponding to the first $\ga$ is inflated by an increasing permutation, then the entry corresponding to the first $\gc$ must be inflated by a permutation from $\Av(312)$, while each subsequent entry corresponding to a $\gc$ must be inflated by a decreasing permutation.  From our multivariate generating function for these simple permutations, it follows that the contribution of their inflations is
\[
\left(\frac{f-m}{f}+\frac{c}{f}+\frac{c}{m}\right)s(f,m,m,c)
=
\frac{cm^2(c-m+f+cf)}{1-2cm-cm^2-mf-cmf}.
\]
Combining this with the generating functions for $f_\oplus$ and $f_\ominus$ and solving for $f$ yields the generating function for the class (or, rather, its minimal polynomial).

\begin{theorem}\label{thm-4312-3142}
The generating function $f$ for $\Av(4312,3142)$ satisfies
\[
\begin{array}{rclcc}
(x^3-2x^2+x)f^4
&+&
(4x^3-9x^2+6x-1)f^3&&\\
&+&
(6x^3-12x^2+7x-1)f^2&&\\
&+&
(4x^3-5x^2+x)f&&\\
&+&
x^3
&=&0.
\end{array}
\]
\end{theorem}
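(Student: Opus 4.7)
The plan is to combine the four contributions already computed in this section---a unit $x$, the sum decomposable series $f_\oplus$, the skew decomposable series $f_\ominus$, and the simple inflation series $S$---into a single functional equation, and then eliminate the auxiliary series $c$ to obtain a polynomial identity in $f$ and $x$ alone. The combinatorial justification for the decomposition is furnished by the unique factorisation result (the simple-skeleton lemma), which tells us that every nonempty $\pi\in\Av(4312,3142)$ is of length one, or sum decomposable (with sum-indecomposable first summand), or skew decomposable (with skew-indecomposable first summand), or an inflation of a unique simple skeleton of length at least four drawn from the grid class.

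Concretely, I would start from
\[
f \;=\; x + f_\oplus + f_\ominus + S,
\]
and substitute the expressions already obtained:
\[
f_\oplus = \frac{f^2}{1+f},\qquad f_\ominus = \frac{m(f+c-m)}{1+m},\qquad S = \frac{cm^2(c-m+f+cf)}{1-2cm-cm^2-mf-cmf},
\]
with $m=x/(1-x)$. Clearing denominators gives a polynomial relation $P(f,c,x)=0$ with coefficients in $\mathbb{Z}[x]$. Squaring the closed form of $c$ yields the quadratic $xc^2 + (2x-1)c + x = 0$, so $c$ is algebraic of degree at most two over $\mathbb{Q}(x)$. Taking the resultant
\[
\operatorname{Res}_c\bigl(P(f,c,x),\; xc^2 + (2x-1)c + x\bigr)
\]
eliminates $c$ and produces a polynomial in $\mathbb{Z}[f,x]$ that vanishes on the generating function of the class. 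After discarding any spurious factors (constants, pure powers of $x$, or factors depending only on $x$), what remains is the polynomial displayed in the theorem.

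The main obstacle is computational rather than conceptual: one must manage the clearing of denominators and the resultant step with care so that no genuine factor is lost and no extraneous factor is retained. As a sanity check, I would compute the initial coefficients of $f$ directly from the functional equation $f = x + f_\oplus + f_\ominus + S$ (which determines each coefficient of $f$ in terms of the preceding ones, since the right-hand side involves only lower-order information about $f$ once the series are expanded), and verify that the claimed quartic annihilates the resulting truncated series through many terms. Agreement to sufficient order, together with the degree bound supplied by the resultant elimination, suffices to confirm that the stated polynomial is indeed satisfied by $f$.
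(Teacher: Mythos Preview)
Your proposal is correct and follows the paper's approach exactly: the paper assembles $f=x+f_\oplus+f_\ominus+S$ from the pieces derived earlier in the section and then states that ``solving for $f$ yields \ldots\ its minimal polynomial,'' which is precisely the clearing-of-denominators and elimination of $c$ (via its Catalan quadratic $xc^2+(2x-1)c+x=0$) that you spell out. Your explicit use of the resultant and the initial-terms sanity check are welcome elaborations of what the paper leaves implicit.
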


The first several terms of this sequence are
\[
1,2,6,22,88,367,1568,6810,29943,132958,595227, 2683373,12170778,55499358,
\]
sequence \OEISlink{A165538} in the \OEISref. Though the form of the equation for $f$ is complicated, the close link with the Catalan numbers which can be seen in the previous development is enough to ensure that the radius of convergence is quite simple, exactly $1/5$, and in particular $f_n^{1/n} \to 5$.

\section{Example \#3: Avoiding $4231$ and $3124$}\label{sec-4231-3124}

Before our final example we review a well-studied class.  A permutation is {\it layered\/} if it is the direct sum of decreasing permutations (these decreasing permutations are called the {\it layers\/}).  The class of {\it layered permutations\/} has the basis $\{312, 231\}$.  To restrict the number of layers, we merely need to add an additional restriction, of the form $12\cdots k$.

\begin{proposition}\label{prop-4231-3124-simples}
The simple permutations of $\Av(4231, 3124)$ and $\Geom\fnmatrix{rrr}{0&1&-1\\1&-1&0}$ coincide.
\end{proposition}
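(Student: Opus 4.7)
My plan parallels the proof of Proposition~\ref{prop-4312-3142-simples}. I would first check the easy inclusion
\[
\Grid\fnmatrix{rrr}{0&1&-1\\1&-1&0}
=
\Geom\fnmatrix{rrr}{0&1&-1\\1&-1&0}
\subseteq
\Av(4231,3124),
\]
where the equality uses the fact that the row--column graph of this matrix is a path (and hence a forest), as per Theorem 3.2 of \cite{albert:geometric-grid-:}. For the containment, if a $4231$ or $3124$ pattern appeared in a gridded permutation, its four entries would have to be distributed among the four nonempty cells (bottom-left and top-middle increasing, bottom-middle and top-right decreasing). A short case analysis on which cell houses each pattern entry, using the internal monotonicity of the cells together with the left-to-right order of columns and the bottom-to-top order of rows, rules out every configuration.

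For the interesting direction, let $\pi$ be a simple permutation in $\Av(4231,3124)$ and let $M$ denote its maximum entry. The entries of $\pi$ to the left of $M$ avoid $312$ (else, together with $M$, they would form a $3124$), and the entries of $\pi$ to the right of $M$ avoid $231$ (else they would form a $4231$ with $M$). My goal is to refine these observations so that $\pi$ decomposes into an initial increasing run of low values (the bottom-left cell), a middle zone of interleaved low-decreasing and high-increasing entries terminating at $M$ (the bottom-middle and top-middle cells), and a terminal decreasing run of high values (the top-right cell).

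The heart of the proof is to upgrade the two avoidance conditions using simplicity, in the style of Figures~\ref{fig-ex2-simple-no-132-after-max} and~\ref{fig-ex2-simple-no-21-after-max}. For any configuration that would obstruct this intended cell structure --- for instance, an ascent between a low-valued and a high-valued entry amongst those to the right of $M$, or a descent amongst certain low-valued entries at the beginning of $\pi$ that would spill out of the bottom-left cell --- I would select an extremal witness (smallest, leftmost, or similar) and then argue that the pattern-avoidance hypotheses forbid every candidate for a vertical or horizontal separating entry. The resulting failure of separation exhibits a proper interval, contradicting the simplicity of $\pi$ and so eliminating the configuration.

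Once $L$ and $R$ are so constrained, an $M$-gridding of $\pi$ can be read off by placing the column dividers at the transitions between the leftmost-low, middle, and rightmost-high portions, and the row divider just above the largest low-valued entry; this shows $\pi \in \Grid(M) = \Geom(M)$. The main obstacle, as in Proposition~\ref{prop-4312-3142-simples}, will be the case analysis in the middle column, where low-decreasing (cell B) and high-increasing (cell C) entries coexist: several configurations of potential intervals need to be analysed, each via its own diagram, to confirm that no separating entry can exist without producing a $4231$ or $3124$ pattern.
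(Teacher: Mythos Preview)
Your overall strategy matches the paper's: split at the maximum $M$, note that the entries to the left of $M$ avoid $312$ and those to the right avoid $231$, and then use simplicity to sharpen these into the required grid structure. However, the target structure you describe is not quite right, and this is a genuine gap rather than a cosmetic one. You assert that the permutation ends with ``a terminal decreasing run of high values (the top-right cell)'' and that the middle zone ``terminat[es] at $M$''. But the entries to the right of $M$ need not form a single decreasing run: already in the simple permutation $2413\in\Av(4231,3124)$ the entries to the right of the maximum are $1,3$, an \emph{ascent}. In general the paper shows (Claim~\ref{prop-4231-3124-simples}.a) that the entries to the right of $M$ form a layered permutation with at most two layers, by proving they avoid $312$ as well as $231$; the $312$-avoidance genuinely requires a simplicity argument and is the subtlest step in the proof.

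The consequence for the gridding is that the second column divider is \emph{not} placed at the position of $M$. The first (low) layer to the right of $M$ is absorbed into the bottom-middle cell, so that the bottom-middle decreasing run extends past $M$, while only the second (high) layer goes into the top-right cell. Concretely, in the paper's notation the regions $R_3$ (between $c$ and $M$) and $R_7$ (the low layer after $M$) together form the bottom-middle cell, and $R_6$ together with $b=\pi(n)$ forms the top-right cell. Your plan of ``placing the column dividers at the transitions between the leftmost-low, middle, and rightmost-high portions'' and putting the second divider at $M$ would leave the low entries after $M$ with nowhere to go (the bottom-right cell of the matrix is $0$). So before you can read off a gridding, you need an additional claim establishing the two-layer structure on the right of $M$, exactly as the paper does.
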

\begin{proof}
First, it is straightforward to observe that
\[
\Geom\fnmatrix{rrr}{0&1&-1\\1&-1&0}
=
\Grid\fnmatrix{rrr}{0&1&-1\\1&-1&0}
\subseteq\Av(4231, 3124),
\]
so it suffices to prove that the simple permutations of $\Av(4231, 3124)$ are contained in this grid class.

Consider a simple permutation $\pi\in\Av(4231,3124)$ of length $n$.   We analyse the entries of $\pi$ to the left of $n$ and to the right of $n$ separately, beginning with the entries on the right.

\newtheorem*{claim-a}{\rm\bf Claim~\ref{prop-4231-3124-simples}.a}
\begin{claim-a}
The entries to the right of $n$ form a layered permutation with at most two layers.
\end{claim-a}

\newclaimproof{proof-claim-a}{Proof of Claim~\ref{prop-4231-3124-simples}.a}

\begin{proof-claim-a}
By the $4231$-avoidance of $\pi$ the entries to the right of $n$ avoid $231$, so to show that they are layered it suffices to show that they also avoid $312$.  Suppose otherwise.  Among all occurrences of $312$ choose one, $cab$, in which the `$1$' and `$2$' are as close together in position as possible (they will in fact be adjacent).  Since $\pi$ is simple, $\{a,b\}$ cannot be an interval and thus (because they are adjacent) must be separated vertically.  We claim that there is at least one such separator to the left of $n$.  Let $x$ denote an arbitrary separator of $\{a,b\}$.  We see that $x$ cannot lie horizontally between $n$ and $c$ by $4231$-avoidance. If $x$ were to lie horizontally between $c$ and $a$, then $\{x,a\}$ cannot be separated horizontally anywhere, nor vertically to the right of $n$, owing to the avoidance conditions, so must be separated vertically to the left of $n$.  This separator therefore separates $\{a,b\}$ vertically to the left of $n$, as desired.  The only other case is if $x$ lies to the right of $b$.  In this case choose $x$ to be the bottommost such separator.  Then $\{b,x\}$ must be separated.  This can only occur to the left of $n$ (giving the separator we desire) or to the right of $x$.  In this latter case it is easy to see that the region which consists of those points from $b$ to the right and which are also vertically between $x$ and $b$ (which contains $b$, $x$, and this new separator) can only be separated to the left of $n$, again giving the separator we desired.  Therefore in all cases we may assume that there is an entry, $x$, to the left of $n$ which vertically separates $a$ and $b$.  This situation is depicted on the left of Figure~\ref{fig-4231-3124-claim-a}.

Now consider $\{n,c\}$.  From Figure~\ref{fig-4231-3124-claim-a}, we see that these entries can only possibly be separated vertically by an entry to the left of $n$ and to the right of $x$.  Choose the leftmost such separator and label it $v$.  We now have the situation depicted in the centre of Figure \ref{fig-4231-3124-claim-a}.  However, it is now clear that the entries $\{c,n,v\}$ lie in a proper interval, contradicting the simplicity of $\pi$.  This contradiction shows that the entries to the right of $n$ must form a layered permutation.

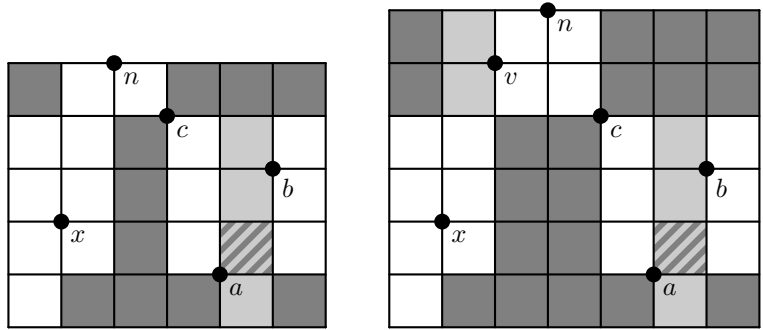
\begin{figure}[t]
\begin{center}
\begin{tabular}{ccc}
\psset{xunit=2pt, yunit=2pt, runit=1.5pt}
\begin{pspicture}(0,0)(60,50)
\pspolygon*[linecolor=darkgray](0,40)(10,40)(10,50)(0,50)
\pspolygon*[linecolor=darkgray](10,0)(20,0)(20,10)(10,10)
\pspolygon*[linecolor=darkgray](20,0)(30,0)(30,10)(20,10)
\pspolygon*[linecolor=darkgray](20,10)(30,10)(30,20)(20,20)
\pspolygon*[linecolor=darkgray](20,20)(30,20)(30,30)(20,30)
\pspolygon*[linecolor=darkgray](20,30)(30,30)(30,40)(20,40)
\pspolygon*[linecolor=darkgray](30,0)(40,0)(40,10)(30,10)
\pspolygon*[linecolor=darkgray](30,40)(40,40)(40,50)(30,50)
\pspolygon*[linecolor=lightgray](40,0)(50,0)(50,10)(40,10)
\pspolygon*[linecolor=lightgray](40,10)(50,10)(50,20)(40,20)
\psframe[linewidth=0,linecolor=white,fillstyle=hlines,hatchcolor=darkgray,hatchwidth=1.5,hatchsep=1.5](40,10)(50,20)
\pspolygon*[linecolor=lightgray](40,20)(50,20)(50,30)(40,30)
\pspolygon*[linecolor=lightgray](40,30)(50,30)(50,40)(40,40)
\pspolygon*[linecolor=darkgray](40,40)(50,40)(50,50)(40,50)
\pspolygon*[linecolor=darkgray](50,0)(60,0)(60,10)(50,10)
\pspolygon*[linecolor=darkgray](50,40)(60,40)(60,50)(50,50)
\pscircle*[linecolor=black](10,20){2.0}
\uput[-45](10,20){$x$}
\pscircle*[linecolor=black](20,50){2.0}
\uput[-45](20,50){$n$}
\pscircle*[linecolor=black](30,40){2.0}
\uput[-45](30,40){$c$}
\pscircle*[linecolor=black](40,10){2.0}
\uput[-45](40,10){$a$}
\pscircle*[linecolor=black](50,30){2.0}
\uput[-45](50,30){$b$}
\psline{c-c}(0,0)(0,50)
\psline{c-c}(0,0)(60,0)
\psline{c-c}(10,0)(10,50)
\psline{c-c}(0,10)(60,10)
\psline{c-c}(20,0)(20,50)
\psline{c-c}(0,20)(60,20)
\psline{c-c}(30,0)(30,50)
\psline{c-c}(0,30)(60,30)
\psline{c-c}(40,0)(40,50)
\psline{c-c}(0,40)(60,40)
\psline{c-c}(50,0)(50,50)
\psline{c-c}(0,50)(60,50)
\psline{c-c}(60,0)(60,50)
\end{pspicture}

&&

\psset{xunit=2pt, yunit=2pt, runit=1.5pt}
\begin{pspicture}(0,0)(70,60)
\pspolygon*[linecolor=darkgray](0,40)(10,40)(10,50)(0,50)
\pspolygon*[linecolor=darkgray](0,50)(10,50)(10,60)(0,60)
\pspolygon*[linecolor=darkgray](10,0)(20,0)(20,10)(10,10)
\pspolygon*[linecolor=lightgray](10,40)(20,40)(20,50)(10,50)
\pspolygon*[linecolor=lightgray](10,50)(20,50)(20,60)(10,60)
\pspolygon*[linecolor=darkgray](20,0)(30,0)(30,10)(20,10)
\pspolygon*[linecolor=darkgray](20,10)(30,10)(30,20)(20,20)
\pspolygon*[linecolor=darkgray](20,20)(30,20)(30,30)(20,30)
\pspolygon*[linecolor=darkgray](20,30)(30,30)(30,40)(20,40)
\pspolygon*[linecolor=darkgray](30,0)(40,0)(40,10)(30,10)
\pspolygon*[linecolor=darkgray](30,10)(40,10)(40,20)(30,20)
\pspolygon*[linecolor=darkgray](30,20)(40,20)(40,30)(30,30)
\pspolygon*[linecolor=darkgray](30,30)(40,30)(40,40)(30,40)
\pspolygon*[linecolor=darkgray](40,0)(50,0)(50,10)(40,10)
\pspolygon*[linecolor=darkgray](40,40)(50,40)(50,50)(40,50)
\pspolygon*[linecolor=darkgray](40,50)(50,50)(50,60)(40,60)
\pspolygon*[linecolor=lightgray](50,0)(60,0)(60,10)(50,10)
\pspolygon*[linecolor=lightgray](50,10)(60,10)(60,20)(50,20)
\psframe[linewidth=0,linecolor=white,fillstyle=hlines,hatchcolor=darkgray,hatchwidth=1.5,hatchsep=1.5](50,10)(60,20)
\pspolygon*[linecolor=lightgray](50,20)(60,20)(60,30)(50,30)
\pspolygon*[linecolor=lightgray](50,30)(60,30)(60,40)(50,40)
\pspolygon*[linecolor=darkgray](50,40)(60,40)(60,50)(50,50)
\pspolygon*[linecolor=darkgray](50,50)(60,50)(60,60)(50,60)
\pspolygon*[linecolor=darkgray](60,0)(70,0)(70,10)(60,10)
\pspolygon*[linecolor=darkgray](60,40)(70,40)(70,50)(60,50)
\pspolygon*[linecolor=darkgray](60,50)(70,50)(70,60)(60,60)
\pscircle*[linecolor=black](10,20){2.0}
\uput[-45](10,20){$x$}
\pscircle*[linecolor=black](20,50){2.0}
\uput[-45](20,50){$v$}
\pscircle*[linecolor=black](30,60){2.0}
\uput[-45](30,60){$n$}
\pscircle*[linecolor=black](40,40){2.0}
\uput[-45](40,40){$c$}
\pscircle*[linecolor=black](50,10){2.0}
\uput[-45](50,10){$a$}
\pscircle*[linecolor=black](60,30){2.0}
\uput[-45](60,30){$b$}
\psline{c-c}(0,0)(0,60)
\psline{c-c}(0,0)(70,0)
\psline{c-c}(10,0)(10,60)
\psline{c-c}(0,10)(70,10)
\psline{c-c}(20,0)(20,60)
\psline{c-c}(0,20)(70,20)
\psline{c-c}(30,0)(30,60)
\psline{c-c}(0,30)(70,30)
\psline{c-c}(40,0)(40,60)
\psline{c-c}(0,40)(70,40)
\psline{c-c}(50,0)(50,60)
\psline{c-c}(0,50)(70,50)
\psline{c-c}(60,0)(60,60)
\psline{c-c}(0,60)(70,60)
\psline{c-c}(70,0)(70,60)
\end{pspicture}
\end{tabular}
\end{center}
\caption{The end games in the proof of Claim \ref{prop-4231-3124-simples}.a.}
\label{fig-4231-3124-claim-a}
\end{figure}

Having established that these entries are layered, it is easy to see that there are at most two layers.  Otherwise the entries to the right of $n$ would contain a copy of $123$.  The `$2$' and `$3$' in this copy of $123$ must be separated by an entry to the left of $n$ (because the entries to the right of $n$ form a layered permutation), but this would create a copy of $3124$.  This contradiction completes the proof of Claim~\ref{prop-4231-3124-simples}.a.
\end{proof-claim-a}

\newtheorem*{claim-b}{\rm\bf Claim~\ref{prop-4231-3124-simples}.b}
\begin{claim-b}
The entries to the left of $n$ and above $\pi(n)$ are increasing.
\end{claim-b}

\newclaimproof{proof-claim-b}{Proof of Claim~\ref{prop-4231-3124-simples}.b}

\begin{proof-claim-b}
Let $a=\pi(n)$ and suppose to the contrary that the entries to the left of $n$ and above $a$ contain an inversion.  Choose such an inversion $yx$ with $y$ as far left as possible and $x$ as close to $y$ as possible.  This gives the situation depicted on the left of Figure~\ref{fig-4231-3124-claim-b}.  As can be seen in this diagram, $\{x,y\}$ could only possibly be separated horizontally.  Let $z$ denote a topmost such separator.  This gives the situation depicted on the right of Figure~\ref{fig-4231-3124-claim-b}.  However, as this diagram indicates, $y$ and $z$ now belong to a proper interval, contradicting the simplicity of $\pi$.
\end{proof-claim-b}

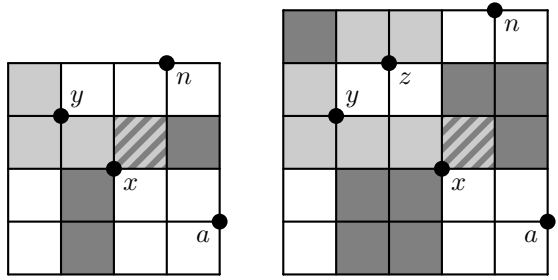
\begin{figure}[t]
\begin{center}
\begin{tabular}{ccc}
\psset{xunit=2pt, yunit=2pt, runit=1.5pt}
\begin{pspicture}(0,0)(40,40)
\pspolygon*[linecolor=lightgray](0,20)(10,20)(10,30)(0,30)
\pspolygon*[linecolor=lightgray](0,30)(10,30)(10,40)(0,40)
\pspolygon*[linecolor=darkgray](10,0)(20,0)(20,10)(10,10)
\pspolygon*[linecolor=darkgray](10,10)(20,10)(20,20)(10,20)
\pspolygon*[linecolor=lightgray](10,20)(20,20)(20,30)(10,30)
\pspolygon*[linecolor=lightgray](20,20)(30,20)(30,30)(20,30)
\psframe[linewidth=0,linecolor=white,fillstyle=hlines,hatchcolor=darkgray,hatchwidth=1.5,hatchsep=1.5](20,20)(30,30)
\pspolygon*[linecolor=darkgray](30,20)(40,20)(40,30)(30,30)
\pscircle*[linecolor=black](10,30){2.0}
\uput[45](10,30){$y$}
\pscircle*[linecolor=black](20,20){2.0}
\uput[-45](20,20){$x$}
\pscircle*[linecolor=black](30,40){2.0}
\uput[-45](30,40){$n$}
\pscircle*[linecolor=black](40,10){2.0}
\uput[225](40,10){$a$}
\psline{c-c}(0,0)(0,40)
\psline{c-c}(0,0)(40,0)
\psline{c-c}(10,0)(10,40)
\psline{c-c}(0,10)(40,10)
\psline{c-c}(20,0)(20,40)
\psline{c-c}(0,20)(40,20)
\psline{c-c}(30,0)(30,40)
\psline{c-c}(0,30)(40,30)
\psline{c-c}(40,0)(40,40)
\psline{c-c}(0,40)(40,40)
\end{pspicture}

&&

\psset{xunit=2pt, yunit=2pt, runit=1.5pt}
\begin{pspicture}(0,0)(50,50)
\pspolygon*[linecolor=lightgray](0,20)(10,20)(10,30)(0,30)
\pspolygon*[linecolor=lightgray](0,30)(10,30)(10,40)(0,40)
\pspolygon*[linecolor=darkgray](0,40)(10,40)(10,50)(0,50)
\pspolygon*[linecolor=darkgray](10,0)(20,0)(20,10)(10,10)
\pspolygon*[linecolor=darkgray](10,10)(20,10)(20,20)(10,20)
\pspolygon*[linecolor=lightgray](10,20)(20,20)(20,30)(10,30)
\pspolygon*[linecolor=lightgray](10,40)(20,40)(20,50)(10,50)
\pspolygon*[linecolor=darkgray](20,0)(30,0)(30,10)(20,10)
\pspolygon*[linecolor=darkgray](20,10)(30,10)(30,20)(20,20)
\pspolygon*[linecolor=lightgray](20,20)(30,20)(30,30)(20,30)
\pspolygon*[linecolor=lightgray](20,40)(30,40)(30,50)(20,50)
\pspolygon*[linecolor=lightgray](30,20)(40,20)(40,30)(30,30)
\psframe[linewidth=0,linecolor=white,fillstyle=hlines,hatchcolor=darkgray,hatchwidth=1.5,hatchsep=1.5](30,20)(40,30)
\pspolygon*[linecolor=darkgray](30,30)(40,30)(40,40)(30,40)
\pspolygon*[linecolor=darkgray](40,20)(50,20)(50,30)(40,30)
\pspolygon*[linecolor=darkgray](40,30)(50,30)(50,40)(40,40)
\pscircle*[linecolor=black](10,30){2.0}
\uput[45](10,30){$y$}
\pscircle*[linecolor=black](20,40){2.0}
\uput[-45](20,40){$z$}
\pscircle*[linecolor=black](30,20){2.0}
\uput[-45](30,20){$x$}
\pscircle*[linecolor=black](40,50){2.0}
\uput[-45](40,50){$n$}
\pscircle*[linecolor=black](50,10){2.0}
\uput[225](50,10){$a$}
\psline{c-c}(0,0)(0,50)
\psline{c-c}(0,0)(50,0)
\psline{c-c}(10,0)(10,50)
\psline{c-c}(0,10)(50,10)
\psline{c-c}(20,0)(20,50)
\psline{c-c}(0,20)(50,20)
\psline{c-c}(30,0)(30,50)
\psline{c-c}(0,30)(50,30)
\psline{c-c}(40,0)(40,50)
\psline{c-c}(0,40)(50,40)
\psline{c-c}(50,0)(50,50)
\psline{c-c}(0,50)(50,50)
\end{pspicture}
\end{tabular}
\end{center}
\caption{The end games in the proof of Claim \ref{prop-4231-3124-simples}.b.}
\label{fig-4231-3124-claim-b}
\end{figure}

\newtheorem*{claim-c}{\rm\bf Claim~\ref{prop-4231-3124-simples}.c}
\begin{claim-c}
Let $c$ denote the leftmost entry of $\pi$ greater than $\pi(n)$ (note that $c$ may equal $n$). The entries to the left of $c$ (which lie below $\pi(n)$ by Claim~\ref{prop-4231-3124-simples}.b) are increasing.
\end{claim-c}

\newclaimproof{proof-claim-c}{Proof of Claim~\ref{prop-4231-3124-simples}.c}

\begin{proof-claim-c}
The proof follows well-travelled lines. Suppose to the contrary that there is an inversion among these entries, and choose one such $ba$ where $b$ is as far left as possible, and $a$ is as small as possible. The cell bounded by $\{a,b\}$ can only be split above, and we may choose a split point $d$ which is as large as possible. Now the box bounded by $\{a,b,d\}$ defines a proper interval, a contradiction.
\end{proof-claim-c}

We are now in position to complete the proof of the proposition, after a brief recap of the structure we have established.  Let $b=\pi(n)$, let $c$ denote the leftmost entry which is greater than $b$ (note that $c=n$ is a possibility), and finally let $a$ denote the bottommost entry which lies horizontally between $c$ and $n$ (such an entry need not exist, but this does not affect the argument).  We then have the situation depicted on the left of Figure~\ref{fig-4231-3124-finale}.

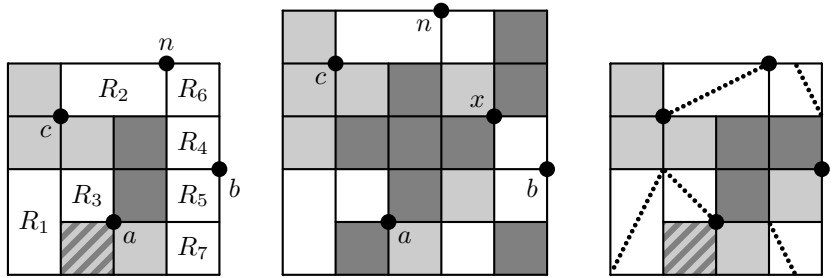
\begin{figure}[t]
\begin{center}
\begin{tabular}{ccccccc}

\psset{xunit=2pt, yunit=2pt, runit=1.5pt}
\begin{pspicture}(0,0)(40,40)
\pspolygon*[linecolor=lightgray](0,20)(10,20)(10,30)(0,30)
\pspolygon*[linecolor=lightgray](0,30)(10,30)(10,40)(0,40)
\pspolygon*[linecolor=lightgray](10,0)(20,0)(20,10)(10,10)
\psframe[linewidth=0,linecolor=white,fillstyle=hlines,hatchcolor=darkgray,hatchwidth=1.5,hatchsep=1.5](10,0)(20,10)
\pspolygon*[linecolor=lightgray](10,20)(20,20)(20,30)(10,30)
\pspolygon*[linecolor=lightgray](20,0)(30,0)(30,10)(20,10)
\pspolygon*[linecolor=darkgray](20,10)(30,10)(30,20)(20,20)
\pspolygon*[linecolor=darkgray](20,20)(30,20)(30,30)(20,30)
\pscircle*[linecolor=black](10,30){2.0}
\uput[-135](10,30){$c$}
\pscircle*[linecolor=black](20,10){2.0}
\uput[-45](20,10){$a$}
\pscircle*[linecolor=black](30,40){2.0}
\uput[90](30,40){$n$}
\pscircle*[linecolor=black](40,20){2.0}
\uput[-45](40,20){$b$}
\rput[c](5,10){$R_1$}
\rput[c](20,35){$R_2$}
\rput[c](15,15){$R_3$}
\rput[c](35,25){$R_4$}
\rput[c](35,15){$R_5$}
\rput[c](35,35){$R_6$}
\rput[c](35,5){$R_7$}
\psline{c-c}(0,0)(0,40)
\psline{c-c}(0,0)(40,0)
\psline{c-c}(10,0)(10,40)
\psline{c-c}(10,10)(40,10)
\psline{c-c}(20,0)(20,30)
\psline{c-c}(0,20)(40,20)
\psline{c-c}(30,0)(30,40)
\psline{c-c}(0,30)(40,30)
\psline{c-c}(40,0)(40,40)
\psline{c-c}(0,40)(40,40)
\end{pspicture}

&&

\psset{xunit=2pt, yunit=2pt, runit=1.5pt}
\begin{pspicture}(0,0)(50,50)
\pspolygon*[linecolor=lightgray](0,20)(10,20)(10,30)(0,30)
\pspolygon*[linecolor=lightgray](0,30)(10,30)(10,40)(0,40)
\pspolygon*[linecolor=lightgray](0,40)(10,40)(10,50)(0,50)
\pspolygon*[linecolor=darkgray](10,0)(20,0)(20,10)(10,10)
\pspolygon*[linecolor=darkgray](10,20)(20,20)(20,30)(10,30)
\pspolygon*[linecolor=lightgray](10,30)(20,30)(20,40)(10,40)
\pspolygon*[linecolor=lightgray](20,0)(30,0)(30,10)(20,10)
\pspolygon*[linecolor=darkgray](20,10)(30,10)(30,20)(20,20)
\pspolygon*[linecolor=darkgray](20,20)(30,20)(30,30)(20,30)
\pspolygon*[linecolor=darkgray](20,30)(30,30)(30,40)(20,40)
\pspolygon*[linecolor=lightgray](30,10)(40,10)(40,20)(30,20)
\pspolygon*[linecolor=darkgray](30,20)(40,20)(40,30)(30,30)
\pspolygon*[linecolor=lightgray](30,30)(40,30)(40,40)(30,40)
\pspolygon*[linecolor=darkgray](40,0)(50,0)(50,10)(40,10)
\pspolygon*[linecolor=darkgray](40,30)(50,30)(50,40)(40,40)
\pspolygon*[linecolor=darkgray](40,40)(50,40)(50,50)(40,50)
\pscircle*[linecolor=black](10,40){2.0}
\uput[225](10,40){$c$}
\pscircle*[linecolor=black](20,10){2.0}
\uput[-45](20,10){$a$}
\pscircle*[linecolor=black](30,50){2.0}
\uput[225](30,50){$n$}
\pscircle*[linecolor=black](40,30){2.0}
\uput[135](40,30){$x$}
\pscircle*[linecolor=black](50,20){2.0}
\uput[-135](50,20){$b$}
\psline{c-c}(0,0)(0,50)
\psline{c-c}(0,0)(50,0)
\psline{c-c}(10,0)(10,50)
\psline{c-c}(10,10)(50,10)
\psline{c-c}(20,0)(20,40)
\psline{c-c}(0,20)(50,20)
\psline{c-c}(30,0)(30,50)
\psline{c-c}(0,30)(50,30)
\psline{c-c}(40,0)(40,50)
\psline{c-c}(0,40)(50,40)
\psline{c-c}(50,0)(50,50)
\psline{c-c}(0,50)(50,50)
\end{pspicture}

&&

\psset{xunit=2pt, yunit=2pt, runit=1.5pt}
\begin{pspicture}(0,0)(40,40)
\pspolygon*[linecolor=lightgray](0,20)(10,20)(10,30)(0,30)
\pspolygon*[linecolor=lightgray](0,30)(10,30)(10,40)(0,40)
\pspolygon*[linecolor=lightgray](10,0)(20,0)(20,10)(10,10)
\psframe[linewidth=0,linecolor=white,fillstyle=hlines,hatchcolor=darkgray,hatchwidth=1.5,hatchsep=1.5](10,0)(20,10)
\pspolygon*[linecolor=lightgray](10,20)(20,20)(20,30)(10,30)
\pspolygon*[linecolor=lightgray](20,0)(30,0)(30,10)(20,10)
\pspolygon*[linecolor=darkgray](20,10)(30,10)(30,20)(20,20)
\pspolygon*[linecolor=darkgray](20,20)(30,20)(30,30)(20,30)
\pspolygon*[linecolor=lightgray](30,10)(40,10)(40,20)(30,20)
\pspolygon*[linecolor=darkgray](30,20)(40,20)(40,30)(30,30)
\pscircle*[linecolor=black](10,30){2.0}
\pscircle*[linecolor=black](20,10){2.0}
\pscircle*[linecolor=black](30,40){2.0}
\pscircle*[linecolor=black](40,20){2.0}
\psline{c-c}(0,0)(0,40)
\psline{c-c}(0,0)(40,0)
\psline{c-c}(10,0)(10,40)
\psline{c-c}(10,10)(40,10)
\psline{c-c}(20,0)(20,30)
\psline{c-c}(0,20)(40,20)
\psline{c-c}(30,0)(30,40)
\psline{c-c}(0,30)(40,30)
\psline{c-c}(40,0)(40,40)
\psline{c-c}(0,40)(40,40)
\psline[linestyle=dotted,linewidth=1.2,dotsep=0.5](0.5,1)(10,20)
\psline[linestyle=dotted,linewidth=1.2,dotsep=0.5](10,20)(20,10)
\psline[linestyle=dotted,linewidth=1.2,dotsep=0.5](10,30)(30,40)
\psline[linestyle=dotted,linewidth=1.2,dotsep=0.5](30,10)(35,0)
\psline[linestyle=dotted,linewidth=1.2,dotsep=0.5](35,40)(40,30)
\end{pspicture}

\end{tabular}
\end{center}

\caption{Final considerations in the proof of Proposition~\ref{prop-4231-3124-simples}.}
\label{fig-4231-3124-finale}
\end{figure}

The first three labeled regions are further restricted as follows:
\begin{itemize}
\item $R_1$ must be increasing by Claim~\ref{prop-4231-3124-simples}.c.
\item $R_2$ must be increasing by Claim~\ref{prop-4231-3124-simples}.b.
\item $R_3$ must be decreasing because $\pi$ avoids $3124$.
\end{itemize}
Next we claim that $R_4$ and $R_5$ are both empty.  First suppose to the contrary that $R_4$ is nonempty, and take $x$ to be the topmost entry in this region.  We then have the situation depicted in the centre of Figure~\ref{fig-4231-3124-finale}, which shows that $\{b,x\}$ can only be separated by an entry in $R_5$.  Let $y$ denote the bottommost such separator.  It can then be seen that there is no way to separate $\{b,x,y\}$.  Showing that $R_5$ is empty is very similar.  Suppose to the contrary that this region is nonempty and let $x$ denote the bottommost entry in the region.  It can be seen that there are two ways to separate $\{b,x\}$: vertically with an entry in $R_5$ or horizontally with an entry in $R_4$.  In each case, though, these new entries cannot be separated from either $b$ or $x$.

Finally, regions $R_6$ and $R_7$ must both be decreasing because $\pi$ avoids $4231$ and $3124$ respectively.  Then Claim~\ref{prop-4231-3124-simples}.a shows that regions $R_5$ and $R_6$ must together form a layered permutation with at most two layers.  The structure of $\pi$ is now displayed on the right of Figure~\ref{fig-4231-3124-finale}, which shows that $\pi$ does indeed lie in the grid class desired, completing the proof.
\end{proof}

Having restricted our attention to this (geometric) grid class, we now seek to place it in bijection with a regular language, following points (G1) and (G2) of Section~\ref{sec-4312-3142}.  The first is the easiest to deal with since we need only forbid factors of the forms $\{\gc,\gd\}^+\ga$ and $\gd^+\gb$.

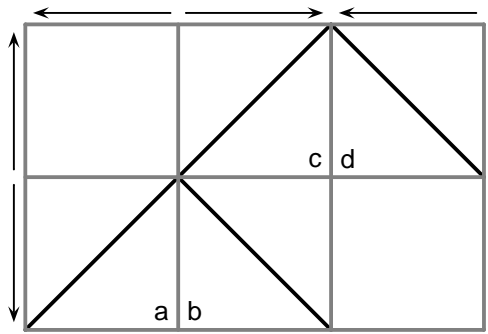
\begin{figure}
\begin{center}
\psset{xunit=0.02in, yunit=0.02in}
\psset{linewidth=0.005in}
\begin{pspicture}(-3,0)(120,85)
\psline[linecolor=black,linestyle=solid,linewidth=0.02in]{c-c}(0,0)(40,40)
\psline[linecolor=black,linestyle=solid,linewidth=0.02in]{c-c}(40,40)(80,80)
\psline[linecolor=black,linestyle=solid,linewidth=0.02in]{c-c}(40,40)(80,0)
\psline[linecolor=black,linestyle=solid,linewidth=0.02in]{c-c}(80,80)(120,40)
\psline[linecolor=darkgray,linestyle=solid,linewidth=0.02in]{c-c}(0,0)(0,80)
\psline[linecolor=darkgray,linestyle=solid,linewidth=0.02in]{c-c}(40,0)(40,80)
\psline[linecolor=darkgray,linestyle=solid,linewidth=0.02in]{c-c}(80,0)(80,80)
\psline[linecolor=darkgray,linestyle=solid,linewidth=0.02in]{c-c}(120,0)(120,80)
\psline[linecolor=darkgray,linestyle=solid,linewidth=0.02in]{c-c}(0,0)(120,0)
\psline[linecolor=darkgray,linestyle=solid,linewidth=0.02in]{c-c}(0,40)(120,40)
\psline[linecolor=darkgray,linestyle=solid,linewidth=0.02in]{c-c}(0,80)(120,80)
\psline[linecolor=black,linestyle=solid,linewidth=0.01in,arrowsize=0.06in]{<-c}(-3,2)(-3,38)
\psline[linecolor=black,linestyle=solid,linewidth=0.01in,arrowsize=0.06in]{c->}(-3,42)(-3,78)
\psline[linecolor=black,linestyle=solid,linewidth=0.01in,arrowsize=0.06in]{<-c}(2,83)(38,83)
\psline[linecolor=black,linestyle=solid,linewidth=0.01in,arrowsize=0.06in]{c->}(42,83)(78,83)
\psline[linecolor=black,linestyle=solid,linewidth=0.01in,arrowsize=0.06in]{<-c}(82,83)(118,83)
\uput[135](40,0){$\ga$}
\uput[45](40,0){$\gb$}
\uput[135](80,40){$\gc$}
\uput[45](80,40){$\gd$}
\end{pspicture}
\end{center}
\caption{A choice of cell alphabet and row and column signs for the geometric grid class of interest.}
\label{fig-4231-3124-grid-encoding}
\end{figure}

To handle (G2), we use the same preference for $M$-griddings as in Section~\ref{sec-4312-3142}: among all $M$-griddings of a permutation, we prefer the one that has the most entries in the first column, then the most entries in the second column, and then the most entries in the first row.  The words that correspond to these preferred griddings are those which do not begin with $\gb$ or $\ga^\ast\gc$, do not end with $\gd$, and are not of the forms $\gd\{\ga,\gb\}^\ast$ or $\ga^\ast\{\gc,\gd\}^+$.

This language allows us to enumerate the grid class itself%
\footnote{The generating function for this grid class is
\[
\frac{1-5x+7x^2-x^3}{(1-x)(1-2x)(1-3x)},
\]
(sequence \OEISlink{A083323} in the \OEISref).  Our computations suggest that the basis of this class is
\[
\{4312, 4231, 4123, 3124, 32541, 21534, 21435\},
\]
but we have not verified this with a formal proof.},
and now we restrict to encoding the simple permutations.

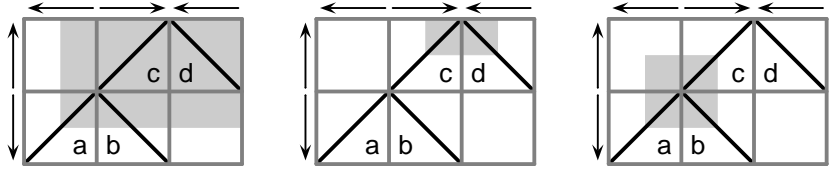
\begin{figure}
\begin{center}
\begin{tabular}{ccccc}

\psset{xunit=0.0095in, yunit=0.0095in}
\psset{linewidth=0.005in}
\begin{pspicture}(-6,0)(120,88)
\psframe[linecolor=lightgray,fillstyle=solid,fillcolor=lightgray](20,20)(120,80)
\psline[linecolor=black,linestyle=solid,linewidth=0.02in](0,0)(40,40)
\psline[linecolor=black,linestyle=solid,linewidth=0.02in](40,40)(80,80)
\psline[linecolor=black,linestyle=solid,linewidth=0.02in](40,40)(80,0)
\psline[linecolor=black,linestyle=solid,linewidth=0.02in](80,80)(120,40)
\psline[linecolor=darkgray,linestyle=solid,linewidth=0.02in]{c-c}(0,0)(0,80)
\psline[linecolor=darkgray,linestyle=solid,linewidth=0.02in]{c-c}(40,0)(40,80)
\psline[linecolor=darkgray,linestyle=solid,linewidth=0.02in]{c-c}(80,0)(80,80)
\psline[linecolor=darkgray,linestyle=solid,linewidth=0.02in]{c-c}(120,0)(120,80)
\psline[linecolor=darkgray,linestyle=solid,linewidth=0.02in]{c-c}(0,0)(120,0)
\psline[linecolor=darkgray,linestyle=solid,linewidth=0.02in]{c-c}(0,40)(120,40)
\psline[linecolor=darkgray,linestyle=solid,linewidth=0.02in]{c-c}(0,80)(120,80)
\psline[linecolor=black,linestyle=solid,linewidth=0.01in,arrowsize=0.06in]{<-c}(-6,2)(-6,38)
\psline[linecolor=black,linestyle=solid,linewidth=0.01in,arrowsize=0.06in]{c->}(-6,42)(-6,78)
\psline[linecolor=black,linestyle=solid,linewidth=0.01in,arrowsize=0.06in]{<-c}(2,86)(38,86)
\psline[linecolor=black,linestyle=solid,linewidth=0.01in,arrowsize=0.06in]{c->}(42,86)(78,86)
\psline[linecolor=black,linestyle=solid,linewidth=0.01in,arrowsize=0.06in]{<-c}(82,86)(118,86)
\uput[135](40,0){$\ga$}
\uput[45](40,0){$\gb$}
\uput[135](80,40){$\gc$}
\uput[45](80,40){$\gd$}
\end{pspicture}

&&

\psset{xunit=0.0095in, yunit=0.0095in}
\psset{linewidth=0.005in}
\begin{pspicture}(-6,0)(120,88)
\psframe[linecolor=lightgray,fillstyle=solid,fillcolor=lightgray](60,60)(100,80)
\psline[linecolor=black,linestyle=solid,linewidth=0.02in](0,0)(40,40)
\psline[linecolor=black,linestyle=solid,linewidth=0.02in](40,40)(80,80)
\psline[linecolor=black,linestyle=solid,linewidth=0.02in](40,40)(80,0)
\psline[linecolor=black,linestyle=solid,linewidth=0.02in](80,80)(120,40)
\psline[linecolor=darkgray,linestyle=solid,linewidth=0.02in]{c-c}(0,0)(0,80)
\psline[linecolor=darkgray,linestyle=solid,linewidth=0.02in]{c-c}(40,0)(40,80)
\psline[linecolor=darkgray,linestyle=solid,linewidth=0.02in]{c-c}(80,0)(80,80)
\psline[linecolor=darkgray,linestyle=solid,linewidth=0.02in]{c-c}(120,0)(120,80)
\psline[linecolor=darkgray,linestyle=solid,linewidth=0.02in]{c-c}(0,0)(120,0)
\psline[linecolor=darkgray,linestyle=solid,linewidth=0.02in]{c-c}(0,40)(120,40)
\psline[linecolor=darkgray,linestyle=solid,linewidth=0.02in]{c-c}(0,80)(120,80)
\psline[linecolor=black,linestyle=solid,linewidth=0.01in,arrowsize=0.06in]{<-c}(-6,2)(-6,38)
\psline[linecolor=black,linestyle=solid,linewidth=0.01in,arrowsize=0.06in]{c->}(-6,42)(-6,78)
\psline[linecolor=black,linestyle=solid,linewidth=0.01in,arrowsize=0.06in]{<-c}(2,86)(38,86)
\psline[linecolor=black,linestyle=solid,linewidth=0.01in,arrowsize=0.06in]{c->}(42,86)(78,86)
\psline[linecolor=black,linestyle=solid,linewidth=0.01in,arrowsize=0.06in]{<-c}(82,86)(118,86)
\uput[135](40,0){$\ga$}
\uput[45](40,0){$\gb$}
\uput[135](80,40){$\gc$}
\uput[45](80,40){$\gd$}
\end{pspicture}

&&

\psset{xunit=0.0095in, yunit=0.0095in}
\psset{linewidth=0.005in}
\begin{pspicture}(-6,0)(120,88)
\psframe[linecolor=lightgray,fillstyle=solid,fillcolor=lightgray](20,20)(60,60)
\psline[linecolor=black,linestyle=solid,linewidth=0.02in](0,0)(40,40)
\psline[linecolor=black,linestyle=solid,linewidth=0.02in](40,40)(80,80)
\psline[linecolor=black,linestyle=solid,linewidth=0.02in](40,40)(80,0)
\psline[linecolor=black,linestyle=solid,linewidth=0.02in](80,80)(120,40)
\psline[linecolor=darkgray,linestyle=solid,linewidth=0.02in]{c-c}(0,0)(0,80)
\psline[linecolor=darkgray,linestyle=solid,linewidth=0.02in]{c-c}(40,0)(40,80)
\psline[linecolor=darkgray,linestyle=solid,linewidth=0.02in]{c-c}(80,0)(80,80)
\psline[linecolor=darkgray,linestyle=solid,linewidth=0.02in]{c-c}(120,0)(120,80)
\psline[linecolor=darkgray,linestyle=solid,linewidth=0.02in]{c-c}(0,0)(120,0)
\psline[linecolor=darkgray,linestyle=solid,linewidth=0.02in]{c-c}(0,40)(120,40)
\psline[linecolor=darkgray,linestyle=solid,linewidth=0.02in]{c-c}(0,80)(120,80)
\psline[linecolor=black,linestyle=solid,linewidth=0.01in,arrowsize=0.06in]{<-c}(-6,2)(-6,38)
\psline[linecolor=black,linestyle=solid,linewidth=0.01in,arrowsize=0.06in]{c->}(-6,42)(-6,78)
\psline[linecolor=black,linestyle=solid,linewidth=0.01in,arrowsize=0.06in]{<-c}(2,86)(38,86)
\psline[linecolor=black,linestyle=solid,linewidth=0.01in,arrowsize=0.06in]{c->}(42,86)(78,86)
\psline[linecolor=black,linestyle=solid,linewidth=0.01in,arrowsize=0.06in]{<-c}(82,86)(118,86)
\uput[135](40,0){$\ga$}
\uput[45](40,0){$\gb$}
\uput[135](80,40){$\gc$}
\uput[45](80,40){$\gd$}
\end{pspicture}

\end{tabular}
\end{center}
\caption{The shaded areas represent possible intervals in elements of the geometric grid class.}
\label{fig-4231-3124-grid-intervals}
\end{figure}

\begin{itemize}
\item To prevent intervals contained within individual cells, we prohibit repetitions $\ga\ga$, $\gb\gb$, $\gc\gc$, $\gd\gd$ as factors.
\item To prevent intervals of the form shown in the first pane of Figure~\ref{fig-4231-3124-grid-intervals}, we insist that the last $\ga$ is followed by a $\gb$, i.e., we prohibit words which end in $\ga\{\gc,\gd\}^\ast$,
\item To prevent intervals of the form shown in the second pane of Figure~\ref{fig-4231-3124-grid-intervals}, we require that there is a $\gb$ after the last letter in $\{\gc,\gd\}$; by the previous two rules, this means we need only prohibit words which end in $\gc\gd$ or $\gd\gc$.
\item To prevent intervals of the form shown in the third pane of Figure~\ref{fig-4231-3124-grid-intervals}, we prohibit words which begin with two or more letters from $\{\ga,\gb,\gc\}$.
\end{itemize}
Finally, we exclude the word $\gd\gc\gb$, which is mapped by $\bij$ to the nonsimple permutation $312$.

We can then compute (again using the \textsf{automata} package~\cite{delgado:automata-----a-:} for \textsf{GAP}~\cite{:gap----groups-a:}) that the multivariate generating function (counting occurrences of each letter) for the simple permutations in this geometric grid class is
\[
s(\xa,\xb,\xc,\xd)=\frac{\xb\xc\xd(\xa+\xc+\xa\xb+\xa\xc+\xb\xc+\xc\xd+\xa\xb\xc+\xb\xc\xd)}
{1-\xa\xb-\xb\xc-\xc\xd-\xa\xb\xc-\xb\xc\xd}.
\]
It remains only to determine the allowed inflations.  First, the sum decomposable permutations all have unique representations in the form $\Av_{\not\oplus}(312)\oplus \Av(4231,3124)$, where $\Av_{\not\oplus}(312)$ denotes the sum indecomposable permutations in $\Av(312)$.  As $\Av_{\not\oplus}(312)$ is well-known to be enumerated by the shifted Catalan numbers, we get
\[
f_\oplus=(xc+x)f.
\]

The skew decomposable permutations are of the form $\Av(312)\ominus \Av(231,3124)$, and so (deviating slightly from our usual conventions) they all have a unique representation of the form $\Av(312)\ominus \Av_{\not\ominus}(231,3124)$.  The skew indecomposable permutations in $\Av(231,3124)$ are counted by the Fibonacci numbers of odd index ($1$, $1$, $3$, $8$, $21$, $\dots$), giving that
\[
f_\ominus=\frac{x-2x^2+x^2}{1-3x+x^2}c.
\]
We now come to inflations of simple permutations of length at least four.  Each entry corresponding to an $\ga$ or $\gc$ may be inflated by an arbitrary member of $\Av(312)$.  Each entry corresponding to a $\gb$ or non-initial $\gd$ may be inflated only by a decreasing interval.  Finally, the first entry corresponding to a $\gd$ (and we see from the multivariate generating function $s$ that every word encoding simple permutations in this class contains at least one $\gd$) may be inflated by an arbitrary member of $\Av(231,3124)$, a class counted by the Fibonacci numbers of even index ($1$, $2$, $5$, $13$, $34$, $\dots$).  Therefore, the generating function we are interested in is given by
\[
f
=
(xc+x)f
+
\frac{x-2x^2+x^2}{1-3x+x^2}c
+
\frac{s(c,m,c,m)}{m}\frac{x-x^2}{1-3x+x^2}.
\]
As this equation is linear in $f$, it is trivial to obtain our final result.

\begin{theorem}
The generating function for $\Av(4231, 3124)$ is
\[
\frac{1-8x+20x^2-20x^3+10x^4-2x^5 - (1-4x+2x^2)\sqrt{1-4x}}
{2(1-3x+x^2)(-1+5x-4x^2+x^3)}.
\]
\end{theorem}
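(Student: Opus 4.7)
The plan is to use the functional equation already derived immediately before the theorem statement, namely
\[
f
=
(xc+x)f
+
\frac{x-2x^2+x^2}{1-3x+x^2}c
+
\frac{s(c,m,c,m)}{m}\cdot\frac{x-x^2}{1-3x+x^2},
\]
and simply solve it for $f$. Since this equation is linear in $f$, all that remains is substitution and simplification: I would group the $f$-terms to obtain $(1-xc-x)f = G(x)$ where $G(x)$ collects the remaining contributions, and then divide through.

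First I would evaluate $s(c,m,c,m)$ using the closed form
\[
s(\xa,\xb,\xc,\xd)=\frac{\xb\xc\xd(\xa+\xc+\xa\xb+\xa\xc+\xb\xc+\xc\xd+\xa\xb\xc+\xb\xc\xd)}{1-\xa\xb-\xb\xc-\xc\xd-\xa\xb\xc-\xb\xc\xd}.
\]
With $\xa=\xc=c$ and $\xb=\xd=m$, both numerator and denominator become polynomials in $c$ and $m$, and the explicit factor $1/m$ cancels the $\xb\xc\xd=m\cdot c\cdot m$ factor nicely, leaving a rational expression in $c$, $m$, and $x$. Substituting $m=x/(1-x)$ turns this into a rational function of $c$ and $x$ whose denominator, after clearing, involves the factor $1-3x+x^2$ (matching the Fibonacci-of-odd-index denominator that already appears in $f_\ominus$).

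Next I would combine all three contributions over a common denominator, use the minimal polynomial $xc^2-(1-2x)c+x=0$ to reduce any $c^2$ (or higher) terms that appear, and solve $(1-xc-x)f = G$ for $f$. At this stage one is left with an expression of the form $f = (P_1(x)+P_2(x)\,c)/Q(x)$ for explicit polynomials $P_1, P_2, Q$; substituting $c=(1-2x-\sqrt{1-4x})/(2x)$ and rationalising if necessary then produces the stated closed form
\[
\frac{1-8x+20x^2-20x^3+10x^4-2x^5 - (1-4x+2x^2)\sqrt{1-4x}}{2(1-3x+x^2)(-1+5x-4x^2+x^3)}.
\]

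The main obstacle is purely bookkeeping: tracking the algebra through the substitutions without error and verifying that common factors cancel to produce the displayed numerator and denominator exactly. In practice I would carry out the simplification in a computer algebra system and then sanity-check the result by expanding in a power series and matching against the initial values of the counting sequence (which can be computed directly from the functional equation, or from the enumeration of $\Av(4231,3124)$ up to some reasonable length). No genuinely new ideas are required beyond what has been assembled in the preceding paragraphs.
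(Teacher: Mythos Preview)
Your proposal is correct and matches the paper's own approach exactly: the paper likewise notes that the displayed functional equation is linear in $f$ and says it is ``trivial to obtain our final result,'' leaving the substitutions $m=x/(1-x)$, $c=(1-2x-\sqrt{1-4x})/(2x)$ and the ensuing algebraic simplification implicit. Your additional remarks about using the Catalan minimal polynomial and series-checking are sensible implementation details, but no different idea is needed.
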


The first several terms of this sequence are
\[
1, 2, 6, 22, 88, 363, 1508, 6255, 25842, 106327, 435965, 1782733, 7275351, 29648647,
\]	
sequence \OEISlink{A165535} in the \OEISref. The radius of convergence is the smallest positive root of the cubic factor in the denominator, approximately 0.2451, and hence $f_n^{1/n} \to 4.0796\dots$

\section{Conclusion}\label{sec-infinite-conclusion}

It should be noted that the three examples presented in this paper are not the only $2\times 4$ classes which have been enumerated using these techniques.  In \cite{albert:counting-1324-4:}, the present authors used a precursor of this approach to enumerate $\Av(4231,1324)$; in the language of this paper, they proved that the simple permutations in this class are contained in
\[
\Geom\fnmatrix{rrr}{1&0&-1\\0&\bullet&0\\-1&0&1}.
\]
(Here the $\bullet$ entry denotes a cell in the standard figure filled with a unique point; this notion is formally defined in \cite[Section 10]{albert:geometric-grid-:}.)

Finally, we point out that there may be other $2\times 4$ classes to which these techniques apply.  While there is a decision procedure to determine whether a given class lies in a monotone grid class (see Huczynska and Vatter~\cite{huczynska:grid-classes-an:}), there is no known procedure to determine whether a given class lies in a \emph{geometric} grid class (and indeed, there are indications that this question may be quite difficult).  Needless to say, deciding whether the simple permutations of a given class lie in a geometric grid class is expected to be more difficult still.

\bigskip

\def\cprime{$'$}

\end{document}